\documentclass{article}
\usepackage{amssymb,amsmath,amsthm}
\usepackage{graphicx}
\usepackage[T1]{fontenc}
\usepackage{yfonts}

\newtheorem{theorem}{Theorem}

\newtheorem{lemma}{Lemma}
\newtheorem{proposition}{Proposition}

\newtheorem{remark}{Remark}
\newtheorem{corollary}{Corollary}
\newtheorem{definition}{Definition}
\newtheorem{example}{Example}

\newtheorem{problem}{Problem}


\def\bL{{\mathbb L}}
\def\bN{{\mathbb N}}

\def\bR{{\mathbb R}}

\def\cA{{\mathcal A}}

\def\cF{{\mathcal F}}
\def\cG{{\mathcal G}}

\def\cS{{\mathcal S}}
\def\cP{{\mathcal P}}
\def\cT{{\mathcal T}}

\def\cV{{\mathcal V}}

\def\fG{{\mathfrak G}}
\def\fH{{\mathfrak H}}

\def\fW{{\mathfrak W}}
\def\F{{_{F}}}

\def\G{{_{G}}}

\def\FG{{_{FG}}}

\def\Rt{{\bR^2}}

\def\vf{{\mathfrak X}_r(\Rt)}

\def\vfp{{\mathfrak X_r}(\Rt_\Phi)}

\def\smooth{{C^\infty(\bR^2)}}
\def\Cr{{C^r(\bR^2)}}

\def\smooth{C^\infty(\Rt)}

\bibliographystyle{amsalpha}

\title{Weak solutions of the cohomological equation on $\bR^2$ for regular vector fields}

\author{Roberto De Leo}

\begin{document}
\maketitle

{\sl Math Subject Classification:} Primary: 37J99, 53C12, 35F05

{\sl Keywords:} Cohomological equation, linear first-order PDEs, weak solutions

{\sl Address:} Dept. of Mathematics, Howard University, Washington, DC 20059 (USA) and
Istituto Nazionale di Fisica Nucleare, sez. di Cagliari, Monserrato (Italy)

{\sl Email:} roberto.deleo@howard.edu, roberto.deleo@ca.infn.it

\begin{abstract}
  In a recent article~\cite{DeL10c}, we studied the global solvability of the so-called 
  cohomological equation $L_\xi f=g$ in $C^\infty(\Rt)$, where $\xi$ is is a regular
  vector field on the plane and $L_\xi$ the corresponding Lie derivative.
  In a joint article with T. Gramchev and A. Kirilov~\cite{DGK10}, 
  we studied the existence of global $L^1_{loc}$ weak solutions of the 
  cohomological equation for vector fields depending only on one coordinate. 
  Here we generalize the results of both articles by providing explicit conditions 
  for the existence of global weak solutions to the cohomological equation when 
  $\xi$ is intrinsically Hamiltonian or of finite type.
\end{abstract}
\section{Introduction}
The topological structure of {\em regular} (i.e. without zeros) vector fields $\xi$ 
on $\Rt$ has been thoroughly investigated during the last century and is well 
understood (see~\cite{DeL10c} for detailed references).
The global analytic properties of the corresponding partial differential operators 
$L_\xi$ (Lie derivative in the $\xi$ direction) are, on the contrary, much 
less known. Some of their most basic properties were studied in~\cite{DeL10c}.

The main purposes of this article are to refine some of the 
results in~\cite{DeL10c}, concerning the action of $L_\xi$ on spaces of 
differentiable functions, and to use them to generalize to a much 
wider set of regular vector fields the results obtained in~\cite{DGK10}, 
concerning weak solutions of the cohomological equation
%
\begin{equation}
  \label{eq:ce}
  L_\xi f=g\in C^k(\Rt)
\end{equation}
for regular planar vector fields depending only on one variable.
%
\section{Definitions and main results}
The following definitions and notations will be used in the present article. 

\noindent{\bf Vector Fields and Foliations.}
We will usually denote vector fields by $\xi$ and, to avoid ambiguities,
the corresponding Lie derivative operator by $L_\xi$.
We say that a $C^1$ function is \emph{regular} if its differential
is never zero; analogously, we say that a vector field is regular when it
has no zeros.
We denote by $\vf$ the set of all smooth regular vector fields on the plane.
Given any $\xi\in\vf$, $\cF_\xi$ will denote the smooth foliation of its integral 
trajectories and, by abuse of notation, the space of leaves endowed with its 
canonical quotient smooth structure\footnote{The spaces of leaves are often 
non-Haussdorf. For an introduction to non-Haussdorf smooth structures, see~\cite{HR57}}.
We denote by $\pi_\xi:\Rt\to\cF_\xi$ the canonical projection that sends a point to 
the leaf passing through it. A {\em saturated} neighborhood of a leaf $\ell$ of 
$\cF_\xi$ is a set $\pi_\xi^{-1}(U)$, where $U$ is a neighborhood of $\ell$ in $\cF_\xi$.
We say that two integral trajectories $s_1$, $s_2$ of $\xi$ are \emph{inseparable}
when they are inseparable as points in the topology of $\cF_\xi$
(e.g. see Fig.~\ref{fig:hstrip}).
An integral trajectory $s$ which is inseparable from some other integral trajectory
is said a {\em separatrix}. We denote by $\cS_\xi$ the set of all separatrices 
of $\xi$.
\begin{definition}
  A regular planar vector field $\xi$ (and, by extension, the foliation $\cF_\xi$) 
  is of \emph{finite type} 
  if the set of its separatrices is closed in $\cF_\xi$
  and every separatrix is inseparable from just finitely many other integral trajectories. 
\end{definition}
The set of vector fields of finite type is of great relevance
since important categories of vector fields belong to it. 
For example, every regular polynomial vector field is of finite type: finite bounds 
for the number of separatrices of a polynomial vector field were found 
first by Markus~\cite{Mar72} and later improved independently by 
M.P.~Muller~\cite{Mul76b} and S.~Schecter and M.F. Singer~\cite{SS80}. 
It is easy to verify that also all vector fields strongly proportional to
regular vector fields invariant with respect to translations in  
a given direction are of finite type.
An important feature of vector fields of finite type is that the complement 
of the set of the separatrices of a vector field
of finite type is the disjoint union of countably many unbounded connected open sets 
(named by Markus~\cite{Mar54} \emph{canonical regions}) whose boundary 
has only a finite number of connected components.

We say that a smooth foliation is \emph{Hamiltonian} if its leaves are the 
level sets of a regular smooth function. 
A vector field $\xi\in\vf$ is Hamiltonian when $L_\xi(dx\wedge dy)=0$.
\begin{definition}
  Two smooth vector fields are \emph{strongly proportional} if they
  are proportional through a strictly positive or strictly negative smooth function.
  A smooth vector field is \emph{intrinsically Hamiltonian} if it is strongly
  proportional to a smooth Hamiltonian vector field and is 
  \emph{transversally Hamiltonian} if it is transversal to a Hamiltonian smooth
  vector field (equivalently, to the level sets of a regular smooth function). 
\end{definition}
It is easily seen that a smooth regular vector field $\xi$ is intrinsically Hamiltonian 
if and only if the partial differential equation $L_\xi f=0$ admits a smooth regular 
solution and is transversally Hamiltonian if and only if 
the partial differential inequality $L_\xi f>0$ has a smooth solution. 
It was proved in~\cite{Wei88} (resp. in~\cite{DeL10c}) that every Hamiltonian 
vector field (resp. every vector field of finite type) is transversally Hamiltonian.
%

\noindent {\bf Differential Structures.} Every locally injective 
homeomorphism $\Phi:\Rt\to\bR$ determines a differential $C^\infty$ structure
on $\Rt$ given by the atlas containing all charts $(U,\Phi|_U)$, where 
$U\subset\Rt$ is any open set on which $\Phi$ is injective. We denote by
$\Rt_\Phi$ the differential manifold $\Rt$ with the differential structure
induced by $\Phi$. In general $\Rt_\Phi$ is different from $\bR$, and in particular
$C^\infty(\Rt_\Phi)\neq C^\infty(\Rt)$, but $\Rt_\Phi$ is always globally 
diffeomorphic to $\Rt$, e.g. because it is well-known that, in dimension
smaller than four, every topological manifold admits just one differential
structure modulo diffeomorphisms (see~\cite{Moi77} and~\cite{Rud01}).
%

\noindent{\bf Functional Spaces.} 
Let $\bL^1_0=[-1,0)$ and 
$\bL^2_0=[-1,0]\times[-1,1]\setminus(0,0)$. 
We denote by $\fH^k(\bL^i_0)$, $i=1,2$, the ring of left germs 
at the origin of functions in $C^k(\bL^i_0)$, i.e. the equivalence classes 
determined by the equivalence relation $h\simeq h'$ if $h$ and $h'$ coincide 
in some left neighborhood of the origin. 
We focus our attention of the following subrings of $\fH^k(\bL^i_0)$:
$\fG^{r,k}(\bL^i_0)$, with $r=0,\dots,k$, containing the left germs 
of all functions belonging to $C^r(\overline{\bL}^i_0)\cap C^k(\bL^i_0)$,
and $\fW^{l,p,k}(\bL^i_0)$, with $p\geq1$ and $l=0,\dots,k+3$, 
containing the left germs of all 
functions belonging to $W^{l,p}_{loc}(\bL^i_0)\cap C^k(\bL^i_0)$.
%
\begin{definition}
  We call \emph{singular left germs at the origin} the elements 
  of the quotient rings $S\fG^{r,k}(\bL^i_0)=\fH^k(\bL^i_0)/\fG^{r,k}(\bL^i_0)$ 
  and $S\fW^{l,p,k}(\bL^i_0)=\fH^k(\bL^i_0)/\fW^{l,p,k}(\bL^i_0)$.
  By abuse of notation, we denote by $S\fG^{k+1,k}(\bL^2_0)$
  the singular germs of germs of $C^k$ functions which are $C^{k+1}$ in
  the first variable. 
\end{definition}

\noindent{\bf Main objectives.} 
We consider the partial differential operators\footnote{We will omit the
upper index in the operators $L_\xi$ when there is no ambiguity.}
$$
L^{(r)}_\xi:C^r(\Rt)\to C^{r-1}(\Rt)\,,\;r=1,2,\dots,\infty
$$ 
and their weak extensions 
$$
L^{(l,p)}_\xi:W^{l,p}_{loc}(\Rt)\to W^{l-1,p}_{loc}(\Rt)\,,\;p\geq1,\;l=1,2,\dots\,,
$$
where $W^{l,p}_{loc}(\Rt)$ is the Sobolev space of $L^p_{loc}$ functions whose first 
$l$ weak derivatives 
are also $L^p_{loc}$. We endow $C^r(\Rt)$ with the Whitney topology.
Our main aim is studying the images
$$
L_\xi\left(C^r(\Rt)\right)\cap C^k(\Rt)\,,\;
L_\xi\left(W^{l,p}_{loc}(\Rt)\right)\cap C^k(\Rt).
$$
In other words, we study the existence of global $C^r$ or $W^{l,p}_{loc}$
solutions of the cohomological equation when the right hand side
is of class $C^k$ (notice that, as easily shown via the method of characteristics, 
such solutions are at least $C^k$ everywhere except, at most, on the separatrices).
In case there is no regularity loss (i.e. $r=k+1$),
the problem reduces to studying the full image $L_\xi(C^{k+1}(\Rt))$.

We point out that, since clearly 
$$
L_\xi(C^{k+1+k'}(\Rt))\cap C^k(\Rt)=L_\xi(C^{k+1+k'}(\Rt))\cap C^{k+k'}(\Rt)\hbox{ for all $k'\geq1$},
$$ 
it is enough to consider, for any given $k$, just the cases $r=1,\dots,k+1$.
Similarly, since $W^{k+1,p}(\Rt)\subset C^{k}(\Rt)$ for $p>2$ and 
$W^{k+2,p}(\Rt)\subset C^{k}(\Rt)$ for $1\leq p\leq2$ (e.g. see~\cite{DD12}),
it is enough to consider the cases $l=0,\dots,k+1$ for $p>2$ and
$l=0,\dots,k+2$ for $1\leq p\leq2$.
\begin{remark}
  Note that it makes sense considering the case $l=0$, namely 
  $W^{0,p}_{loc}=L^p_{loc}$ solutions, because, as noted above, all solutions 
  are always of class at least $C^k$ outside of the separatrices
  and, by definition, their derivative in the direction $\xi$ is also $C^k$.
  Namely, ``$L^p_{loc}$'' weak solutions of $L_\xi f=g\in C^k(\Rt)$ are 
  actually elements of $L^p_{loc}(\Rt)\cap C^k(\Rt\setminus\cS_\xi)$.
\end{remark}
%
%

%
\begin{example}
  \label{ex:motivations}
  Consider the regular vector field $\xi=2y\partial_x+(1-y^2)\partial_y$. 
  As a corollary of Proposition 2 in~\cite{DeL10c}, 
  $1\not\in L_\xi(\Cr)\cap C^\infty(\Rt)$
  for any $r$. On the other side 
  $1\in L_\xi(L^{1}_{loc}(\Rt))\bigcap C^\infty(\Rt)$
  since, for example, 
  $L_\xi f=1$ for $f(x,y)=\frac{1}{2}\ln\left|\frac{1+y}{1-y}\right|$. 
  Note that solutions that diverge on just one of the two separatrices
  can be easily obtained through the $L_\xi$'s weak first-integral
  $h(x,y)=x+\ln|1-y^2|$.
\end{example}
%

\noindent{\bf Main Results.}
In Section~\ref{sec:geometry}
we refine some results in~\cite{DeL10c} about the local geometry of Hamiltonian
and finite type regular foliations on the plane. The section's main result, 
contained in Theorem~\ref{thm:NC}, is that for such vector fields, locally, the problem 
of the extension of a solution of the cohomological equation from a saturated 
neighborhood of a separatrix $s_1$ to the saturated neighborhood of an adjacent 
separatrix $s_2$ can be always reduced to the problem of the extension, from 
$\bL^2_0\setminus\{0\}\times[0,\infty)$ to the whole $\bL^2_0$, of a solution 
of $\partial_y f=g\in C^k(\bL^2_0)$. This theorem generalizes Proposition~8 
in~\cite{DeL10c} and fixes a minor mistake in its statement.

In Section~\ref{sec:ce} we study the images of $L^{(r)}_\xi$ and $L^{(l,p)}_\xi$
Our main results are contained in Theorem~\ref{thm:Germs}, where we provide 
explicit criteria for the solubility of the cohomological equation in the
Hamiltonian case, and Theorem~\ref{thm:GermsNH}, were weaker results are
provided for the finite-type non-Hamiltonian case.
Moreover, in Theorem~\ref{thm:OpCl}, we show that the solvability of the
cohomological equation, in the Hamiltonian case, is stable with respect 
to small perturbation of the right hand side.

Finally, in Section~\ref{sec:examples}, we present in some detail four concrete
examples. In the first two we consider, respectively, the cases of two regular 
Hamiltonian and non-Hamiltonian vector fields depending only on one variable 
and with just a pair a separatrices and compare our results with those in~\cite{DGK10}. 
In the last two we consider two case not covered by the results in~\cite{DGK10}:
the case of a regular Hamiltonian vector field with just a pair a 
separatrices and not invariant with respect to translations in any direction 
and the case of a regular Hamiltonian vector field with three separatrices
inseparable from each other.
\section{Geometry of $\cF_\xi$}
\label{sec:geometry}
%
%
The geometry of the set of separatrices of a regular planar foliation
can be quite non-trivial.
Explicit examples of $C^\infty$ (see~\cite{Waz34} and~\cite{Wei88}) and 
$C^\omega$ (see~\cite{Mul76a}) foliations of the plane with a set 
of separatrices dense on some open set are known in literature. 
In the example below, we show how to build an explicit instance, 
simpler and more natural than the ones mentioned above,
of $C^\infty$ foliation whose separatrices are dense on the whole plane.

%
\begin{example}
\label{ex:dense}
  The building blocks of the present example are the $C^\infty$ foliation 
  $\cS$ of the vertical half-stripe $S=[0,1]\times[0,\infty)$, shown in 
  Fig.~\ref{fig:hstrip} (left),
  and the foliation $\cF_0$ in vertical lines of the whole plane. 

  Denote by $t$ the $x$-axis, everywhere transversal to $\cF_0$, and by $P_k$ 
  the half-plane $y<k$ and select a sequence of vertical half-stripes 
  $S_n=[\ell_n,\ell'_n]\times[n,\infty)$ such that
  $S_i\cap S_j=\emptyset$ for $i\neq j$.
  After replacing, on each $S_n$, the vertical foliation 
%
  with a suitably rescaled version of $\cS$, we get
  a $C^\infty$ foliation of the plane $\cF_1$ coinciding with $\cF_0$ 
  for $y<1$ and with a set of separatrices dense on the open set 
  $U=\pi_{\cF_1}^{-1}(\pi_{\cF_1}(t))\supset P_1$. 
  
  Now, let $t_n$ be the image of the transversal $\gamma_n$ in $S_n$,
  $U_n=\pi_{\cF_1}^{-1}(\pi_{\cF_1}(t_n))$ and let 
  $\Phi_n=(\varphi_n,\psi_n):U_n\to\Rt$ be a rectifying diffeomorphism
  such that:
  1. $t_n$ has equation $\psi_n=0$; 2. the leaves of 
  $U_n$ are sent to vertical lines and, in particular, 
  $s_n$ has equation $\varphi_n=0$; 3. the leaves outside $U$
  are those for which $\varphi_n>0$. With the same construction described
  above, we can modify this foliation in the half-plane $\varphi_n>0$
  to produce a new foliation $\cF_2$ which is dense on the open set
  $U\bigcup_{n\in\bN}\pi_{\cF_2}^{-1}(\pi_{\cF_2}(t_n))$
  and coincides with $\cF_1$ on $P_2$.

  By repeating this construction recursively, we get a sequence $\{\cF_{n}\}$ 
  of $C^\infty$ regular foliations of the plane such that
  $\cF_{n+1}|_{P_{n+1}}=\cF_{n}|_{P_{n+1}}$ for every $n\in\bN$
  and the closure of the set of the separatrices of $\cF_n$ contains $P_n$. 
  Hence the $\cF_n$ converge to a smooth foliation $\cF_\infty$ 
  with a set of separatrices dense in the whole plane.

\end{example}
\begin{remark}
  The leaf space of $\cF_\infty$ 
  is a 1-dimensional, smooth non-Hausdorff smooth
  manifold with a dense set of \emph{binary} branch points, namely
  such that at every branch point exactly two branches (or \emph{plumes})
  meet. It is easy to modify the foliation $\cS$ in order to have, 
  at every branch point, the concurrence of any finite number of plumes, 
  or even infinitely (countably) many.
\end{remark}
%
Note that the foliation $\cS$ (see Fig.~\ref{fig:hstrip}) is Hamiltonian, 
so that also every $\cF_n$, and therefore even $\cF_\infty$, is Hamiltonian.
In particular, as a corollary of a Lemma of Weiner~\cite{Wei88} stating that 
the first component projection $\pi:Imm^\infty(\Rt,\Rt)\to Sub^\infty(\Rt,\bR)$ 
is surjective,
this shows that the topology of immersions of the plane into itself can be
quite non-trivial:
%
\begin{proposition}
  For every $k=2,3,\dots,\infty$ there exists a $C^\infty$ immersion
  $\Phi_\FG=(F,G):\Rt\to\Rt$ such that
  the space of the foliation $\cF$ 
  of the level sets of $F$ is a infinite feather of order $k$.
\end{proposition}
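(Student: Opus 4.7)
The plan is to reduce the proposition to three ingredients: the building-block construction of Example~\ref{ex:dense}, the refinement described in the Remark allowing $k$-ary branching, and Weiner's Lemma quoted just above the statement.

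First, following the Remark, I would produce a modified half-strip foliation $\cS_k$ on $S=[0,1]\times[0,\infty)$ with exactly $k$ pairwise inseparable integral trajectories at its base — the construction of $\cS$ is a two-plume template, and identifying $k$ such plumes at a single branch point (inserting $k-1$ additional inseparable leaves into the model picture of Fig.~\ref{fig:hstrip}) is a straightforward modification. Since $\cS$ is Hamiltonian, $\cS_k$ can be chosen Hamiltonian as well by exhibiting a defining regular smooth function $h_k$ on $S$ whose level sets are the leaves of $\cS_k$ and which, outside some compact set of leaves near the base, agrees with the horizontal coordinate $x$ up to an affine transformation.

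Second, unlike in Example~\ref{ex:dense}, I would use only a \emph{single} recursive step: choose disjoint vertical half-strips $S_n=[\ell_n,\ell'_n]\times[n,\infty)$, replace the vertical foliation inside each $S_n$ by a rescaled and translated copy of $\cS_k$, and leave the foliation $\cF_0$ unchanged outside $\bigcup_n S_n$. The resulting smooth regular foliation $\cF$ of $\Rt$ has leaf space obtained from $\bR$ by inserting exactly one $k$-ary branch point for each $n\in\bN$ — precisely an infinite feather of order $k$. To see $\cF$ is Hamiltonian, I glue the local Hamiltonian $x$ of $\cF_0$ to the local Hamiltonian $h_k$ of each rescaled copy of $\cS_k$: since $h_k$ matches an affine function of $x$ outside a compact strip of leaves near the base of $S_n$, one can choose the affine rescaling so that the pieces coincide on an overlap, producing a global regular smooth $F:\Rt\to\bR$ whose level sets are exactly the leaves of $\cF$.

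Third, having constructed $F\in Sub^\infty(\Rt,\bR)$, I simply invoke the Weiner Lemma quoted in the excerpt: the first-component projection $\pi:Imm^\infty(\Rt,\Rt)\to Sub^\infty(\Rt,\bR)$ is surjective, so there exists $G\in C^\infty(\Rt)$ such that $\Phi_{FG}=(F,G):\Rt\to\Rt$ is a smooth immersion, and the foliation of level sets of $F$ is by construction an infinite feather of order $k$.

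The main obstacle is the bookkeeping in the second step: I need to ensure that the local Hamiltonians $h_k$ on the strips $S_n$ patch into a \emph{globally regular} smooth function. This forces a careful choice of the rescaling constants (and of the affine normalization of $h_k$ at the boundary of the modified region inside $S_n$) so that both $F$ and $dF$ match across $\partial S_n$; once this is done the regularity of $F$ everywhere else is automatic because $\cF_0$ is defined by the submersion $x$ and each $\cS_k$ is defined by a regular $h_k$. The passage from $F$ to $\Phi_{FG}$ is then purely an application of the cited Weiner result and requires no further work.
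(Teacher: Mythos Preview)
Your proposal is correct and follows essentially the same route as the paper: the proposition is stated there as an immediate corollary of Example~\ref{ex:dense}, the Remark on $k$-ary branching, the observation that the resulting foliation is Hamiltonian, and Weiner's Lemma, and you have simply spelled out these ingredients explicitly. The only cosmetic difference is that you deliberately stop after a single insertion step (your $\cF$ is the analogue of the paper's $\cF_1$ rather than $\cF_\infty$), which is exactly what is needed to realize an infinite feather rather than a leaf space with dense branch points; the paper leaves this implicit.
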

At the other end of the spectrum,
if we use instead the foliation $\cS'$ in the construction presented in
Example~\ref{ex:dense}, we end up with a foliation $\cF'_\infty$ which cannot 
be obtained as the level set of any
$C^1$ function. Indeed, by construction, any $C^1$ function giving rise to
the foliation of $\cS'$ has differential equal to 0 on $x=-1$.
Since the final foliation was built in such a way that the set of separatrices 
on which the differential is zero is dense on the plane, any $C^1$ function 
having those lines as level sets must have its differentiable null on a
dense set and therefore, by continuity, must be constant.
%
%
\begin{figure}
  \label{fig:hstrip}
  \centering
  \includegraphics[width=5.5cm]{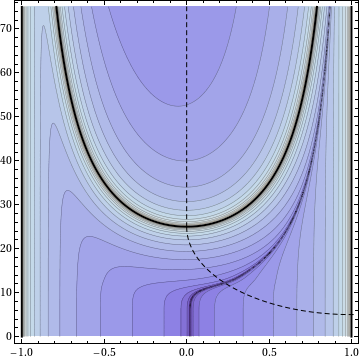}\hskip.5cm
  \includegraphics[width=5.5cm]{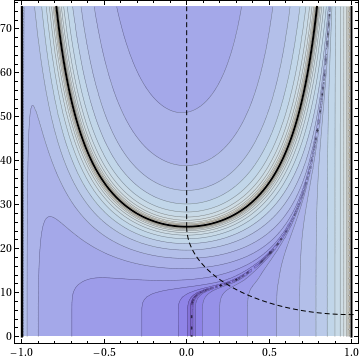}
  \caption{%
    \footnotesize 
    Two $C^\infty$ foliations $\cS$ (left) and $\cS'$ (right) of the strip 
    $S=[0,1]\times[0,\infty)$.
    [left] $\cS$ is Hamiltonian and it can be obtained as 
    the level sets of the (non-regular) $C^\infty$ function
    $F(x,y)=\arctan\left[\frac{1}{\left(f\left(\frac{15-x}{10}\right)h(5) 
      + \left(1 - f\left(\frac{15-x}{10}\right)\right)h(x) 
      + y^2 - 1\right)^3} + \frac{1}{(y - 1)^3} + \frac{1}{(y + 1)^3}\right]^2$,
    where $f(x)$ is any $C^\infty$ function equal to $0$ for $x<0$, 1 for $x>1$
    and strictly increasing between $0$ and 1, and $h(x)=\frac{30}{x+5}$.
    The black thick lines $s_1=\{x=-1\}$ and $s_2=\{y=5\frac{5+x^2}{1-x^2}\}$ 
    are inseparable in the quotient topology.
    The black dashed line $\gamma$ is a particular choice of a globally 
    transverse to the foliation.
    [right] $\cS'$ has the same topology as $\cS$ but it is non-Hamiltonian and 
    it can be obtained as the level sets of the non-regular $C^\infty$ function
    $F(x,y)=\arctan\left[\frac{1}{f\left(\frac{15-x}{10}\right)h(5) 
      + \left(1 - f\left(\frac{15-x}{10}\right)\right)h(x) 
      + y^2 - 1} + \frac{1}{(y - 1)^3} + \frac{1}{(y + 1)^3}\right]^2$.
    Its two separatrices coincide with the ones of the previous foliation.
  }
\end{figure}
%

At this regard, it is important to notice that, for a foliation in $\Rt$,
the lack of a smooth 
regular first-integral is a relative, rather than absolute,
property. Indeed, as a corollary of a result of Kaplan that \emph{every} 
regular foliation of the plane has a \emph{continuous} first-integral~\cite{Kap40}, 
we can prove the following:
%
\begin{theorem}
  Every regular $C^0$ foliation of the plane is a regular $C^\infty$ Hamiltonian 
  foliation for some suitable differential structure.
\end{theorem}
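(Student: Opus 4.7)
My plan is to use Kaplan's theorem to produce a continuous first-integral of $\cF$ and then to pull back the standard differential structure via a homeomorphism that smooths this first-integral. By Kaplan~\cite{Kap40} there is a continuous $H:\Rt\to\bR$ whose level-set connected components are exactly the leaves of $\cF$. I will then produce a self-homeomorphism $\Psi:\Rt\to\Rt$ such that $\tilde H:=H\circ\Psi^{-1}$ is a smooth regular function on the standard $\Rt$; the proof will conclude by declaring the new differential structure $\mathcal D$ on $\Rt$ to be the one in which $\Psi$ is a diffeomorphism onto standard $\Rt$. In $\mathcal D$ the function $H=\tilde H\circ\Psi$ will be smooth with nowhere-vanishing differential, and since the connected components of its level sets are precisely the leaves of $\cF$, the foliation $\cF$ will be a regular $C^\infty$ Hamiltonian foliation on $(\Rt,\mathcal D)$.

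The heart of the argument is the construction of $\Psi$. Since $\cF$ is a regular $C^0$ foliation and $H$ is a first-integral, I can choose an atlas of flow-box charts $\phi_\alpha=(x_\alpha,y_\alpha)$ in which leaves are horizontal lines and $H\circ\phi_\alpha^{-1}(x,y)=h_\alpha(y)$ with $h_\alpha$ continuous and strictly monotonic. Replacing each $\phi_\alpha$ by $\tilde\phi_\alpha=(x_\alpha,H)$ makes $H$ the second coordinate in every chart, and the transition maps then take the form $(u,h)\mapsto(g_{\alpha\beta}(u,h),h)$ with $g_{\alpha\beta}$ continuous and strictly monotonic in $u$. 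To upgrade this atlas to a smooth one I will postcompose each $\tilde\phi_\alpha$ with a leaf-preserving homeomorphism $L_\alpha(u,h)=(\ell_\alpha(u,h),h)$, chosen so that the conjugated cocycle $L_\alpha\circ g_{\alpha\beta}\circ L_\beta^{-1}$ is $C^\infty$; since each $L_\alpha$ preserves the second coordinate, $H$ remains the smooth second coordinate in the modified atlas.

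The hardest step will be choosing the $L_\alpha$ compatibly: this is a cocycle-smoothing problem that I plan to solve by induction on a locally finite refinement of the flow-box cover, at each step perturbing the new chart in the leaf direction only so that all previously established transitions remain smooth. A global appeal to the uniqueness of smooth structures on the plane~\cite{Moi77,Rud01} then packages the resulting smooth atlas (in which $H$ is a smooth regular function) into the single homeomorphism $\Psi$ onto standard $\Rt$, completing the proof.
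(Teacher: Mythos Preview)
Your proposal takes a different and considerably harder route than the paper. The paper's argument is essentially two lines: choose any $C^0$ foliation $\cG$ transversal to $\cF$, take Kaplan first-integrals $F$ and $G$ of $\cF$ and $\cG$ respectively, and observe that $\Phi=(F,G):\Rt\to\Rt$ is locally injective. The differential structure $\Rt_\Phi$ (whose charts are, by the paper's definition, the restrictions of $\Phi$ to open sets on which it is injective) then has all transition maps equal to the identity, hence is trivially $C^\infty$; and in it $F$ is literally the first coordinate, so it is smooth with nowhere-vanishing differential.

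Your approach carries out the right first step---making $H$ a coordinate in every flow box---but then confronts a genuine cocycle-smoothing problem: the transitions $(u,h)\mapsto(g_{\alpha\beta}(u,h),h)$ are only $C^0$, and you must conjugate them to smooth maps by leaf-preserving homeomorphisms chosen coherently over a locally finite cover. You sketch this as an induction ``perturbing the new chart in the leaf direction only'', but no mechanism for the perturbation is given, and this step is not a formality: at each stage one must smooth a one-parameter family of interval homeomorphisms so as to agree with all previously fixed smoothings on overlapping charts. That is effectively a foliated smoothing theorem, and you have not supplied it.

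The paper's insight is precisely that this smoothing is avoidable: replace the arbitrary flow-box coordinate $u$ by the first-integral $G$ of a transversal foliation. Since $G$ is globally defined, the charts $(F,G)$ agree on overlaps and there is nothing to smooth. What your route would buy, if completed, is independence from the existence of a transversal $C^0$ foliation; but that existence is a classical companion to Kaplan's theorem, so the gain is minimal compared to the extra work.
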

\begin{proof}
  Let $\cF$ be a regular planar foliation and let $\cG$ be any foliation
  everywhere transversal to it. Assume that $\cF$ is not Hamiltonian, 
  otherwise there is nothing to prove, and let $F$ and $G$ two continuous 
  first-integrals of, respectively, $\cF$ and $\cG$. 
  Then the map $\Phi_\FG=(F,G):\Rt\to\bR$ is locally injective and,
  in $\Rt_{\Phi_\FG}$, both $\cF$ and $\cG$ are $C^\infty$ Hamiltonian.
  Indeed in every chart $(U,(\Phi_\FG)|_U)$, by definition, $F$ and $G$ are, 
  respectively, the $x$ and $y$ coordinates and therefore are $C^\infty$ 
  and their differential is nowhere zero. Moreover, by construction, 
  $\cF=\{dF=0\}$ and $\cG=\{dG=0\}$. 
\end{proof}
%
Applying to this context the proof of Weiner's Lemma in~\cite{Wei88} and of
Theorem~2 in~\cite{DeL10c}, we can prove the following more general result:
\begin{theorem}
  Let $\cF$ be a $C^0$ foliation of $\Rt$. Then,
  if $\cF$ is $C^r$ and either Hamiltonian or of finite type 
  in $\Rt_\Phi$,
  it admits a transversal $C^r$ Hamiltonian foliation in $\Rt_\Phi$.
\end{theorem}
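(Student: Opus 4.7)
The plan is to exploit the fact, noted already in the excerpt, that $\Rt_\Phi$ is globally $C^\infty$-diffeomorphic to the standard $\Rt$ (via uniqueness of smooth structures in dimension two). Fix any such diffeomorphism $\phi:\Rt\to\Rt_\Phi$ and transport everything by it: the foliation $\cF$ pulls back to a $C^r$ regular foliation $\tilde\cF=\phi^{*}\cF$ on standard $\Rt$, and because being ``Hamiltonian'' and being ``of finite type'' are diffeomorphism-invariant notions, $\tilde\cF$ inherits whichever of the two properties $\cF$ enjoyed. It therefore suffices to build a transversal $C^r$ Hamiltonian foliation $\tilde\cG$ on standard $\Rt$ and then push it forward by $\phi$ to obtain the required foliation $\cG=\phi_{*}\tilde\cG$ in $\Rt_\Phi$.

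For the construction of $\tilde\cG$ I would split into the two cases of the hypothesis. In the Hamiltonian case, I invoke Weiner's Lemma~\cite{Wei88} in the form cited just above the theorem: starting from a regular $C^r$ first integral $\tilde F$ of $\tilde\cF$, the lemma produces a companion $C^r$ function $\tilde G$ such that $(\tilde F,\tilde G):\Rt\to\Rt$ is an immersion, and the level sets of $\tilde G$ give the desired transversal Hamiltonian foliation. In the finite-type case, I invoke Theorem~2 of~\cite{DeL10c}, which directly supplies a $C^r$ Hamiltonian vector field transversal to $\tilde\cF$; its integral trajectories define $\tilde\cG$. Either way, pushing forward by $\phi$ closes the argument.

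The delicate point, and presumably the reason the authors write ``applying the proof'' rather than simply ``applying the statement'', is that Weiner's Lemma and Theorem~2 of~\cite{DeL10c} are originally formulated in a $C^\infty$ setting, while here the input regularity is only $C^r$. I would therefore revisit those proofs and verify that each ingredient --- partitions of unity subordinate to a saturated covering, integration of a suitably chosen $1$-form along a global transversal to produce a Hamiltonian, and, in the finite-type case, the canonical-region-by-canonical-region gluing controlled by Markus's finiteness result --- preserves $C^r$ regularity when fed $C^r$ data. This bookkeeping, rather than any new geometric idea, is where the bulk of the work will lie; once it is verified, the theorem follows immediately from the diffeomorphism transport described above.
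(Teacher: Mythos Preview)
Your proposal is correct and follows essentially the same route as the paper, which simply states that the result is obtained by ``applying to this context the proof of Weiner's Lemma in~\cite{Wei88} and of Theorem~2 in~\cite{DeL10c}''. Your explicit transport via a global diffeomorphism $\phi:\Rt\to\Rt_\Phi$ is a harmless unpacking of what the paper leaves implicit, and you correctly identify that the only substantive work is checking that those two proofs go through with $C^r$ data in place of $C^\infty$.
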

%
%

Now we refine some result in~\cite{DeL10c} on the local geometry of
regular foliations.
%
\begin{definition}
  Given a pair of adjacent inseparable leaves $s_1,s_2$ of a foliation
  $\cF$, we say that a curve $\gamma$ {\em separates} them, or that $\gamma$
  is \emph{between} $s_1$ and $s_2$, if $s_1$ and
  $s_2$ belong to different connected components of $\Rt\setminus\gamma$.
  We say that a foliation $\cG$ transversal to $\cF$ \emph{minimally separates}
  $\cF$ if there is only one leaf of $\cG$ between every two adjacent separatrices
  of $\cF$.
\end{definition}
\begin{figure}
  \label{fig:reduction}
  \centering
  \includegraphics[width=5.5cm]{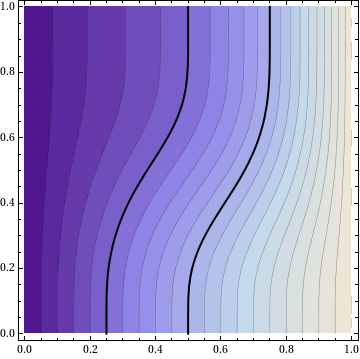}
  \caption{%
    \footnotesize Basic block to eliminate all leaves (except one)
    separating two consecutive inseparable leaves of a foliation.
  }
\end{figure}
We start with a technical Lemma:
\begin{lemma}
  \label{thm:tech}
  Consider the foliation $\cV$ in vertical lines of the set
  $$S=(-1,0]\times(-2,2)\setminus\{0\}\times[-1/2,1/2].$$
  There exists a $C^\infty$ Hamiltonian foliation $\cT$ of $S$ which 
  is everywhere transversal to $\cV$ and minimally separates it.
\end{lemma}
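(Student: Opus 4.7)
My plan is to realise $\cT$ as the level-set foliation of an explicit Hamiltonian of the form
$$
H(x,y) \;=\; y \;-\; \alpha(x)\,\beta(y),
$$
where $\alpha,\beta\in C^\infty(\bR)$ are chosen so that $\alpha$ encodes the vanishing of the deformation as $x\to -1^+$ while $\beta$ localises it to the slit region $|y|\le 1/2$. Concretely, I take $\alpha:(-1,0]\to[0,1]$ smooth and strictly increasing with $\alpha(x)\to 0$ as $x\to -1^+$ and $\alpha(0)=1$ attained only at $x=0$, and $\beta(y)=y\,\chi(y)$ for a standard $C^\infty$ bump $\chi$ equal to $1$ on $[-1/2,1/2]$ and supported in $[-1,1]$. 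Then $\beta$ is smooth, odd, equals $y$ on $[-1/2,1/2]$, vanishes for $|y|\ge 1$, and a direct computation yields $\beta'\le 1$ everywhere with $\beta'(y)<1$ strictly for $|y|>1/2$.

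The next step is to verify that $H$ induces the required foliation. The key identity is $H_y=1-\alpha(x)\beta'(y)$. For $x\in(-1,0)$ one has $\alpha(x)<1$ and $\beta'(y)\le 1$, hence $H_y>0$; for $x=0$ every point of $S$ satisfies $|y|>1/2$, so $\beta'(y)<1$ strictly and again $H_y>0$. Thus $\nabla H$ is nowhere zero on $S$: the level sets of $H$ define a $C^\infty$ regular Hamiltonian foliation $\cT$, and the inequality $H_y>0$ is precisely the transversality of $\cT$ with the vertical foliation $\cV$.

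Finally I verify minimal separation by inspecting the level set $\{H=0\}$. The equation $y=\alpha(x)\beta(y)$ has no solution in $S$ with $y\ne 0$: for such $y$ it rewrites as $\alpha(x)=1/\chi(y)$, which equals $1$ only on $|y|\le 1/2$ (forcing $x=0$, but then $(0,y)$ lies on the slit, outside $S$) and exceeds $1$ on $|y|>1/2$ (impossible since $\alpha\le 1$). Therefore $\{H=0\}\cap S=(-1,0)\times\{0\}$ is a single horizontal leaf $\ell_0$, which manifestly separates $s_1=\{0\}\times(1/2,2)$ from $s_2=\{0\}\times(-2,-1/2)$. For any $c\ne 0$, a sign analysis of $H=c$ shows that $\ell_c$ lies entirely in $\{\sign y=\sign c\}$ and meets $s_1$ (if $c>0$) or $s_2$ (if $c<0$) in a single point; one can then join $s_2$ to the lower arc of $s_1\setminus\ell_c$ by a path in the interior running strictly on the side of $\ell_c$ opposite to its endpoint, showing that $\ell_c$ does not separate $s_1$ from $s_2$. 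Thus $\ell_0$ is the unique separating leaf. The main obstacle is preserving $H_y>0$ on the arcs $s_1,s_2$, where $\alpha=1$; this is exactly what dictates the design condition $\beta'<1$ for $|y|>1/2$ and hence the shape of $\beta$.
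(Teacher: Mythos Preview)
Your proof is correct and takes a genuinely different route from the paper. The paper constructs $\cT$ by an iterative limiting procedure: beginning with the horizontal foliation of $S$, it inserts a fixed ``rerouting block'' (Fig.~\ref{fig:reduction}) into a dyadic sequence of rectangles $R_n$ accumulating at the slit, so that at stage $n$ all horizontal leaves entering at heights $-1\le y\le -2^{-n-1}$ have been diverted onto $s_2$; the limit foliation then has a single separating leaf, and a reflection in $y$ handles the upper half. Your argument bypasses the limit entirely by writing down an explicit global Hamiltonian $H(x,y)=y-\alpha(x)\beta(y)$ and reading off regularity, transversality ($H_y>0$), and the level-set structure directly. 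This is shorter and more transparent, and it avoids having to check that the limit of the $\cG_n$ is $C^\infty$ and Hamiltonian. The trade-off is that the paper's block-insertion scheme is tailored to patching: it modifies a given transversal foliation only inside a prescribed rectangle while leaving it intact elsewhere, which is exactly what Theorem~\ref{thm:minsep} needs. Your $H$ gives horizontal leaves for $|y|\ge 1$ but is not flat as $x\to -1^{+}$ unless you additionally require $\alpha$ to vanish to infinite order there; this is not demanded by the lemma as stated, but you would need it for the subsequent application. One small point: the inequality $\beta'(y)<1$ for $|y|>1/2$ uses that $\chi$ is nonincreasing on $[1/2,1]$ (so that $y\chi'(y)\le 0$); this is true for the standard monotone bump but is worth stating, since a non-monotone cutoff can violate it.
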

\begin{proof}
  We start by replacing the rectangle $R_1=[1/2,1/4]\times[-1,0]$ with a suitably 
  rescaled copy of the rectangle $R$ shown in Fig.~\ref{fig:reduction}. Because
  of how $R$ is foliated, after the substitution we get a foliation with the 
  same regularity of the previous one and still transversal at every point
  to the vertical direction.  
  Inside $R_1$ though, all leaves crossing the line $x=-1$ at $-1/2\leq y\leq -1/4$ 
  are rerouted so that they cross $\ell_1$ at $-3/4\leq y\leq -1/2$.
  
  Now we repeat the procedure by replacing the rectangles 
  $R_n=[2^{-n},2^{-n-1}]\times[-1,0]$ with suitably rescaled copies of $R$.
  At the step $n$ therefore we still have a foliation $\cG_n$ of the same regularity
  of the original one and everywhere transversal to the vertical direction
  but this time though, among all leaves crossing $x=-1$ for $-1\leq y\leq0$,
  only those for which $-1/2^{n+1}< y\leq0$ do not cross $\ell_1$.

  In the limit for $n\to\infty$ we are left with a foliation $\cG'$ such 
  that all of its leaves crossing $x=-1$ for $-1\leq y<0$ do cross the line 
  $\ell_1$. By replacing the upper half of the foliation with the symmetric 
  of the lower part with respect to the $x$ axis we found the foliation 
  $\cG$ mentioned above.

  The claim is proved by repeating this procedure for all pairs of adjacent
  separatrices of the foliation.
\end{proof}
\begin{theorem}
  \label{thm:minsep}
  Let $\cF$ be a $C^0$ foliation of $\Rt$. Then
  there exists a $C^0$ transverse foliation $\cG$ which minimally 
  separates $\cF$.
  Moreover, if $\cF$ is $C^r$ and either Hamiltonian or of finite type 
  in some $\Rt_\Phi$,
  then $\cG$ can be chosen to be $C^r$ and Hamiltonian with respect to $\Rt_\Phi$.
\end{theorem}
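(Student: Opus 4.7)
The plan is to build $\cG$ by starting from some transversal foliation $\cG_0$ to $\cF$ of the required regularity and then applying Lemma~\ref{thm:tech} in a saturated neighborhood of each pair of adjacent inseparable separatrices. In the pure $C^0$ case the initial $\cG_0$ is the foliation by level sets of a continuous first-integral transverse to $\cF$, furnished by the Kaplan-based argument used in the proof of the first theorem of this section; when $\cF$ is $C^r$ and either Hamiltonian or of finite type in some $\Rt_\Phi$, the second theorem of this section directly provides a transversal $C^r$ Hamiltonian $\cG_0$ in $\Rt_\Phi$.

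Next, I would enumerate the adjacent inseparable pairs $\{(s_i^1,s_i^2)\}_{i\in\bN}$ of $\cF$: a regular foliation of $\Rt$ has at most countably many separatrices, since each separatrix must cross any fixed countable family of transverse arcs covering $\Rt$. For each pair, choose a saturated neighborhood $V_i$ of $s_i^1\cup s_i^2$ that contains no other separatrix of $\cF$ and whose boundary lies on two leaves of $\cG_0$. A standard rectification, together with the local normal form for a pair of inseparable leaves, identifies $V_i$ with the set $S$ of Lemma~\ref{thm:tech} in such a way that leaves of $\cF$ become vertical segments and $\cG_0|_{V_i}$ becomes a foliation everywhere transverse to them. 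Lemma~\ref{thm:tech} then supplies a $C^r$ Hamiltonian foliation $\cT_i$ on $V_i$, transverse to $\cF$ and with exactly one leaf between $s_i^1$ and $s_i^2$. Substituting $\cT_i$ for $\cG_0$ inside $V_i$ and keeping $\cG_0$ outside yields a modified transverse foliation that minimally separates $\cF$ at the $i$-th pair.

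The main obstacle is the gluing when the set of separatrices is dense, as in the foliation $\cF_\infty$ of Example~\ref{ex:dense}: the neighborhoods $V_i$ then accumulate everywhere, so the local substitutions cannot be carried out simultaneously while controlling regularity at the common boundaries. I would handle this by a recursive scheme analogous to the one used to build $\cF_\infty$ itself: fix a compact exhaustion $\{K_n\}$ of $\Rt$ and, at step $n$, modify the current foliation $\cG_{n-1}$ only inside the previously untreated neighborhoods $V_i\subset K_n$, shrinking each $V_i$ enough that $\cG_n$ coincides with $\cG_{n-1}$ on $K_{n-2}$. Since each $\cG_n$ is a transverse $C^r$ (Hamiltonian, when applicable) foliation and the sequence stabilizes on every compact set, the limit $\cG=\lim\cG_n$ inherits the regularity and the Hamiltonian property on $\Rt_\Phi$, and by construction has exactly one leaf between every two adjacent separatrices of $\cF$.
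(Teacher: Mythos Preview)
Your approach is essentially the paper's: start from a transversal Hamiltonian foliation $\cG_0$ (via Weiner's Lemma in the Hamiltonian case, via Theorem~2 of \cite{DeL10c} in the finite-type case), then invoke Lemma~\ref{thm:tech} near each pair of adjacent separatrices and iterate over all pairs.

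The paper executes this more directly by working in the $(F,G)$ coordinates given by $\Phi_\FG$: there $\cG_0$ is simply the horizontal foliation, and for each inseparable pair one replaces $\cG_0$ on a small rectangle $R_\epsilon=(a-\epsilon,a)\times(b_1,b_2)$ by the foliation of Lemma~\ref{thm:tech}, which already agrees with the horizontal foliation along the vertical edges of $R_\epsilon$, so the gluing is automatic. Your description in terms of a ``saturated neighborhood $V_i$ of $s_i^1\cup s_i^2$ that contains no other separatrix of $\cF$ and whose boundary lies on two leaves of $\cG_0$'' is not quite right: a saturated neighborhood of a pair of leaves has boundary on leaves of $\cF$, not $\cG_0$, is unbounded in $\Rt$ (so cannot be identified with the bounded set $S$ of Lemma~\ref{thm:tech}), and in the dense case cannot avoid other separatrices. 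What one actually needs is precisely a rectangle in the $(F,G)$ chart, which is what the paper uses. Your recursive exhaustion for the dense Hamiltonian case is a reasonable precaution that the paper leaves tacit; once one observes that each local modification lives in its own $R_\epsilon$ and outputs again a Hamiltonian transverse foliation of the same regularity, the iteration over all pairs goes through without the extra apparatus.
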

\begin{proof}
  Let $\cA$ be any atlas of $\Rt$ where $\cF$ is of class $C^r$.
  Then, either by Weiner's Lemma in~\cite{Wei88} (if $\cF$ is Hamiltonian)
  or by Theorem~2 in~\cite{DeL10c} (if it is of finite type), there exists
  a $C^r$ locally injective map $\Phi_\FG=(F,G)$ (which is an immersion if
  $\cF$ is Hamiltonian) that sends $\cF$ and $\cG$, respectively, 
  in vertical and horizontal lines. 

  By definition of inseparable leaves, for every pair of adjacent
  inseparable leaves $s_1,s_2\in\cF$ cut, respectively, by the
  transversals $t_1,t_2\in\cG$, the set 
  $U_{12}=\pi_\cF^{-1}(\pi_\cF(t_1))\cap\pi_\cF^{-1}(\pi_\cF(t_2))$ contains
  a saturated one-sided (left or right, in the $(F,G)$ coordinates) 
  neighborhood of $s_1$ and $s_2$. Let $s_1\cup s_2\subset F^{-1}(a)$,
  $t_i=G^{-1}(b_i)$, with $b_1<b_2$, and assume, for the sake of the 
  argument, that $U_{12}\subset F^{-1}((-\infty,a))$. 
  Then $U_{12}\supset R_\epsilon=(a-\epsilon,a)\times(b_1,b_2)$ for some 
  $\epsilon>0$.

  By Lemma~\ref{thm:tech}, we can replace the restriction of $\cG$ to
  $R_\epsilon$ with a new foliation in such a way that the new foliation
  $\cG'$ is still Hamiltonian, has the same regularity of $\cG$ and 
  separates minimally $s_1$ and $s_2$. The proof is concluded by repeating 
  this process for all pairs of adjacent separatrices of $\cF$.
\end{proof}
%
%
The previous result allows us to state a stronger version of Proposition~8 
in~\cite{DeL10c}.
This version also fixes a minor mistake in that Proposition's claim.
\begin{theorem}
  \label{thm:NC}
  Let $\xi\in\vfp$ be either Hamiltonian or locally finite
  and let $F\in C^\infty(\Rt_\Phi)$ be a generator of $\ker L_\xi$ and 
  $G\in C^\infty(\Rt_\Phi)$ such that $L_\xi G>0$ and $\cG=\{dG=0\}$ minimally 
  separates $\cF_\xi$.
  Then, for every pair of adjacent separatrices $s_1,s_2\in\cF_\xi$, with 
  $s_1\cup s_2\subset F^{-1}(a)$, separated by $t\in\cG$, with $t\subset G^{-1}(b)$,
  and leaves $t_1,t_2\in\cG$, with $t_i\subset G^{-1}(b_i)$, cutting respectively
  $s_1$ and $s_2$, set $U=\pi_\xi^{-1}(\pi_\xi(t_1))\cup\pi_\xi^{-1}(\pi_\xi(t_2))$
  and $V=\pi_\xi^{-1}(\pi_\xi(t_1))\cap\pi_\xi^{-1}(\pi_\xi(t_2))$.
  The map $\Phi_\FG=(F,G):\Rt\to\Rt$ satisfies the following conditions:
  \begin{enumerate}
  \item the restriction of $\Phi_\FG$ to $U$ is a diffeomorphism onto $\Phi_\FG(U)$;
  \item the leaves of $\cF_\xi$ and $\cG$ are mapped, respectively, 
    into vertical and horizontal lines;
  \item 
    $\Phi_\FG(s_i)=\{a\}\times G(s_i)$, with 
    $G(s_1)\cup G(s_2)=(b'_1,b)\cup(b,b'_2)$ 
    for some $-\infty\leq b'_1<b$ and $b<b'_2\leq\infty$;
  \item if $V\subset F^{-1}((-\infty,a))$
    (resp. if $V\subset F^{-1}((a,\infty))$,
    then $\Phi_\FG(t)=(a_1,a)\times\{b\}$ for some $-\infty\leq a_1<a$
 (resp. $\Phi_\FG(t)=(a,a_1)\times\{b\}$ for some $a<a_1\leq\infty$)
    and $(a-\epsilon,a)\times(b_1,b_2)\subset\Phi_\FG(V)$ 
 (resp. $(a,a+\epsilon)\times(b_1,b_2)\subset\Phi_\FG(V)$) for some $\epsilon>0$.
  \end{enumerate}
\end{theorem}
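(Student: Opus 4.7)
The plan is to address the four assertions sequentially, extracting the local structure from the algebraic hypotheses on $F$ and $G$ and then using minimal separation to upgrade this to a global bijection on $U$.

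First I would establish that $\Phi_{FG}$ is everywhere a smooth local diffeomorphism of $\Rt_\Phi$. Since $L_\xi F = 0$ and $L_\xi G > 0$, the covector $dF$ annihilates the nowhere-vanishing vector $\xi$ while $dG$ does not; hence $dF$ and $dG$ are linearly independent at every point, and the Jacobian of $\Phi_{FG}$ is nonsingular throughout $\Rt_\Phi$. Because the level sets of $F$ are exactly the leaves of $\cF_\xi$ and the level sets of $G$ are the leaves of $\cG$, they are sent, respectively, to vertical and horizontal lines of the target, which is claim (2).

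Next, for claim (1), I would prove injectivity of $\Phi_{FG}$ restricted to $U$. Along each individual leaf $\ell$ of $\cF_\xi$, the strict inequality $L_\xi G > 0$ forces $G$ to be strictly monotonic, so $\Phi_{FG}$ sends $\ell$ bijectively onto an open interval of the vertical line $\{F = F(\ell)\}$. If two points of $U$ had equal images, they would lie on the same $F$-level set and on the same leaf of $\cG$; I would then observe that $U$ consists of leaves of $\cF_\xi$ that cross either $t_1$ or $t_2$, and the minimal separation of $\cF_\xi$ by $\cG$ guarantees that each non-singular $F$-value in $F(U)$ is realized by at most one such leaf, while the singular value $a$ is realized only by $s_1 \cup s_2$. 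Combined with monotonicity of $G$ along each leaf, this forces the two points to coincide. Openness of $\Phi_{FG}(U)$ is automatic from the local diffeomorphism property, finishing claim (1).

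For claims (3) and (4), I would directly compute the image. Since $s_1 \cup s_2 \subset F^{-1}(a)$, both separatrices map into $\{a\} \times \bR$, and monotonicity of $G$ along each $s_i$ realizes $G(s_i)$ as an open interval. The leaf $t \subset G^{-1}(b)$ separating them maps into the horizontal line $\{G = b\}$, and minimal separation of $\cF_\xi$ by $\cG$ forces $b$ to be the common endpoint at which $G(s_1)$ and $G(s_2)$ abut without containing it, yielding the decomposition $(b_1', b) \cup (b, b_2')$. The side of $F^{-1}(a)$ on which $V$ lies determines whether $\Phi_{FG}(t)$ extends leftward or rightward from $(a, b)$. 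The existence of the $\epsilon$-rectangle in $\Phi_{FG}(V)$ then follows from openness: near each point $(a, b')$ with $b_1 < b' < b_2$, leaves of $\cF_\xi$ slightly to the appropriate side of $F^{-1}(a)$ must extend in $G$ past both $b_1$ and $b_2$, hence they belong to $V$; continuity and compactness of $[b_1, b_2]$ in the target yield a uniform $\epsilon$.

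The main obstacle will be the injectivity step of claim (1): a priori, the saturated set $U$ could ``wrap back'' so that two distinct points share the same $(F, G)$-values, or additional pairs of inseparable leaves on $F^{-1}(a)$ could intrude between $s_1$ and $s_2$. The minimal separation hypothesis is precisely what excludes these pathologies, and this is why Theorem \ref{thm:minsep} is proved first and invoked here --- also accounting for the ``minor mistake'' in the earlier Proposition 8 of \cite{DeL10c} that the present formulation is designed to correct.
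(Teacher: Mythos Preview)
The paper does not include a self-contained proof of this statement; it presents Theorem~\ref{thm:NC} as a corrected and strengthened restatement of Proposition~8 in~\cite{DeL10c}, with Theorem~\ref{thm:minsep} supplying the new ingredient (existence of a minimally separating transversal foliation $\cG$). The detailed argument is deferred to that earlier paper, so there is no proof here to compare against line by line.

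Your direct approach is the natural one, and the local-diffeomorphism step, claim~(2), and the sketches for (3) and (4) are sound. There is, however, a genuine gap in your treatment of injectivity for claim~(1). You assert that minimal separation ``guarantees that each non-singular $F$-value in $F(U)$ is realized by at most one such leaf'', but this is not what minimal separation says, and it is in fact false already in the paper's first worked example (Section~\ref{ex:TodorH}): for every $c>0$ the level set $F^{-1}(c)$ meets $U$ in two distinct leaves, one in $\{y>1\}\subset U_1\setminus U_2$ and one in $\{y<-1\}\subset U_2\setminus U_1$. Injectivity of $\Phi_\FG$ on $U$ therefore cannot be reduced to uniqueness of the $\cF_\xi$-leaf. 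What one must show instead is that whenever two distinct leaves $\ell\subset U_1\setminus U_2$ and $\ell'\subset U_2\setminus U_1$ share an $F$-value, their $G$-ranges are disjoint intervals lying on opposite sides of $b$ (in the example they are $(-\infty,0)$ and $(0,\infty)$). This is where the single separating leaf $t\in\cG$ and the adjacency of $s_1,s_2$ genuinely enter, and it is presumably the substance of the argument in~\cite{DeL10c} that the paper is invoking. The same defect undermines your claim that equal images force the two points onto ``the same leaf of $\cG$'': equal $G$-values only place them on the same level set $G^{-1}(d)$, which need not be a single leaf of $\cG$.
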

\begin{definition}
  Under the conditions of the previous theorem, we call $\Phi_\FG:U\to\Rt$ 
  a \emph{normal chart} for the adjacent separatrices $s_1,s_2$.
\end{definition}
\section{$L_\xi(C^r(\Rt))\cap C^k(\Rt)$ and $L_\xi(W^{l,p}_{loc}(\Rt))\cap C^k(\Rt)$}
\label{sec:ce}
It is well known that local solutions to the cohomological equation~(\ref{eq:ce})
can be built through the method of characteristics. When $\xi\in\vf$ and
$g\in C^k(\Rt)$, the only obstruction to the existence of a $C^r$ solution, 
$0\leq r\leq k$, is the problem of extending a local 
solution across pairs of adjacent separatrices (e.g. see~\cite{DeL10c}). 
In a normal chart (see Theorem~\ref{thm:NC}), this problem can always be reduced 
to the following:
%
\begin{problem}
  Let $g\in C^k(\bL^2_0)$ and $\varphi\in C^r(\overline{\bL^1_0}\times\{-1\})$
  and define on $\bL^1_0\times\{1\}$ the function 
  $\psi(x)=\int_{-1}^1 g(x,t)dt+\varphi(x)$. 
  Under which conditions on $g$ the function $\psi$ can be extended 
  to a $C^r$ function at $x=0$? 
  Similarly, assuming 
  $\varphi\in W^{l,p}(\bL^1_0\times\{-1\})\cap C^r(\bL^1_0\times\{-1\})$, 
  under which conditions on $g$ the function $\psi$ does is also 
  $W^{l,p}_{loc}$ at $x=0$?
\end{problem}
%
%
\begin{definition}
  We denote, respectively, by $\theta_{r,k}:S\fG^{k,r}(\bL^2_0)\to S\fG^{k,r}(\bL^1_0)$ 
  and $\theta_{l,p,k}:S\fW^{l,p,k}(\bL^2_0)\to S\fW^{l,p,k}(\bL^1_0)$ 
  the homomorphisms associating, to the left singular germ at $(0,0)$ 
  of a function $g\in C^k(\bL^2_0)$, the left singular germ at $0$ 
  of the function $f(x)=\int_{-1}^1 g(x,y)dy\in C^k(\bL^1_0)$ modulo,
  respectively, functions of class $C^r$ and $W^{l,p}_{loc}$ at $x=0$. 
  Correspondingly, we set 
  $\Theta_{r,k}=\ker\theta_{r,k}$
  and $\Theta_{l,p,k}=\ker\theta_{l,p,k}$.
\end{definition}
\begin{theorem}
  \label{thm:Theta}
  The sets $\Theta_{r,k}$ and $\Theta_{l,p,k}$ satisfy the following properties
  for all $k=0,1,2,\dots,\infty$ and $p\geq1$:
  \begin{enumerate}
  \item $\Theta_{r,k}\subsetneq\Theta_{r-1,k}$ for all $1\leq r\leq k$;
  \item $\Theta_{k,k}$ contains the left singular germs of all $y$-odd $C^k$ functions;
  \item $S\fW^{l,p,k}(\bL^2_0)\subset\Theta_{l,p,k}$ for all $l=0,1,2,\dots$;
  \item $\Theta_{r,k}\subsetneq\Theta_{r,p,k}$ for all $0\leq r\leq k$.
  \end{enumerate}
\end{theorem}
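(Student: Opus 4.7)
The four claims can be dispatched independently. In each case, either the statement is essentially a containment of regularity classes (the easy direction of (1) and (4)), or a direct symmetry/Fubini argument (parts (2) and (3)), or requires an explicit family of singular test germs (strictness in (1) and (4)). I would proceed in the stated order.

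For part (1), the inclusion $\Theta_{r,k}\subset\Theta_{r-1,k}$ is just $\fG^{r,k}\subset\fG^{r-1,k}$. To show strictness I would take
\[
g_r(x,y) = (x^2+y^2)^{(2r-3)/4},
\]
which is $C^\infty$ on $\bL^2_0$ (its only singularity is at the origin itself), hence in $\fH^k$ for every $k$. The substitution $y=|x|t$ yields
\[
\int_{-1}^1 g_r(x,y)\,dy = |x|^{r-1/2}\int_{-1/|x|}^{1/|x|}(1+t^2)^{(2r-3)/4}\,dt,
\]
and expanding $(1+t^2)^{(2r-3)/4}$ term-by-term in $1/t^2$ gives $f_r(x)=C_r+c_r|x|^{r-1/2}+O(|x|^{2r-1})$ with $c_r\ne 0$. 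Since $(-x)^{r-1/2}$ is $C^{r-1}$ at $0$ from the left but its $r$-th derivative is of order $(-x)^{-1/2}$, one obtains $[g_r]\in\Theta_{r-1,k}\setminus\Theta_{r,k}$. Part (2) is immediate: if $g(x,-y)=-g(x,y)$ then $\int_{-1}^1 g\,dy\equiv 0$, so $\theta_{k,k}([g])=0$.

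For part (3), given $g\in\fW^{l,p,k}(\bL^2_0)$, Fubini and the commutation of weak $x$-derivatives with the $y$-integral give $\partial_x^j f(x)=\int_{-1}^1\partial_x^j g(x,y)\,dy$ for $0\le j\le l$, and Minkowski's integral inequality then yields
\[
\Bigl(\int|\partial_x^j f|^p\,dx\Bigr)^{1/p}\le \int\Bigl(\int|\partial_x^j g(x,y)|^p\,dx\Bigr)^{1/p}dy<\infty
\]
on any left neighbourhood of the origin, so $f\in\fW^{l,p,k}(\bL^1_0)$ and $[g]\in\Theta_{l,p,k}$. For part (4), the inclusion $\Theta_{r,k}\subset\Theta_{r,p,k}$ is $\fG^{r,k}\subset\fW^{r,p,k}$; for strictness one picks the exponent so that the term-by-term integration of $(1+t^2)^\alpha$ develops a resonance $t^{-1}$ (which happens when $\alpha$ is a half-integer) and thereby produces a $\log|x|$ term in $f$: concretely $g=(x^2+y^2)^{(r-1)/2}$ for even $r\ge 0$ and $g=x\cdot(x^2+y^2)^{(r-2)/2}$ for odd $r\ge 1$. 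The resulting $f(x)\sim c(-x)^r\log(-x)$ from the left is $C^{r-1}$ but not $C^r$ at $0$, while its $r$-th derivative is logarithmic and thus in $L^p_{loc}$ for every $p\in[1,\infty)$.

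The main technical point is the asymptotic analysis of $\int_0^{1/|x|}(1+t^2)^\alpha\,dt$: expanding as $\sum_k\binom{\alpha}{k}\int t^{2\alpha-2k}\,dt$, each term contributes $T^{2\alpha-2k+1}/(2\alpha-2k+1)$ except when $2\alpha+1=2k$, in which case it contributes $\log T=-\log|x|$. Multiplied by $|x|^{2\alpha+1}$, this yields either the pure power $|x|^{2\alpha+1}$ used in (1) or the logarithmic term $|x|^{2\alpha+1}\log|x|$ used in (4). Everything else—the parity cancellation of (2), the Minkowski step of (3), and the containment of regularity classes driving the easy directions—is routine.
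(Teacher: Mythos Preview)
Your argument is correct, and the overall strategy---building the strictness witnesses out of powers $(x^2+y^2)^\alpha$---is the same as the paper's, but the execution differs in a way worth flagging. For~(1) and~(4) the paper works from a single base function $g(x,y)=x/\sqrt{x^2+y^2}$: its $y$-integral behaves like $x\log|x|$, so $[g]\in\Theta_{0,\infty}\setminus\Theta_{1,\infty}$, and then iterated $x$-integration (resp.\ one $x$-differentiation followed by iterated integration) automatically produces the witnesses for every $r$ in~(1) (resp.~(4)). This sidesteps any asymptotic bookkeeping beyond the base case. You instead pick the exponent anew for each $r$ and extract the singular part of $\int(1+t^2)^\alpha dt$ term by term; this is more explicit and your half-integer/resonance mechanism for producing the $x^r\log|x|$ term in~(4) is arguably cleaner than the paper's somewhat cryptic ``$g^{1/p}$'' remark.

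One small gap in your write-up: in~(1) you assert ``$c_r\ne 0$'' without justification. This coefficient is the finite part of $\int_0^\infty(1+t^2)^{(2r-3)/4}dt$, which by the Beta integral equals $\tfrac{\sqrt\pi}{2}\,\Gamma\bigl(\tfrac{1-2r}{4}\bigr)/\Gamma\bigl(\tfrac{3-2r}{4}\bigr)$; since neither argument is a nonpositive integer for integer $r$, this is indeed finite and nonzero, but you should say so. The paper's integration-in-$x$ trick avoids this check entirely, which is the main thing its approach buys. Parts~(2) and~(3) are handled identically in both proofs.
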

\begin{proof}
  In~\cite{DeL10c} we showed that the (left singular germ of the) function 
  $$
  g(x,y)=\frac{x}{\sqrt{x^2+y^2}}
  $$ 
  provides an example of an element belonging to $\Theta_{0,\infty}$ but not 
  to $\Theta_{1,\infty}$.
  After integrating $r$ times $g$ with respect to $x$, we get concrete examples 
  of elements belonging to $\Theta_{r,\infty}$ but not to $\Theta_{r+1,\infty}$, 
  which proves point (1). Point~(2) is due to the fact that in that case
  the integral of $g$ in $y$ is zero on every interval symmetric with 
  respect to zero.

  To prove~(3) we notice that, if $g\in W^{l,p}(\bL^2_0)$,
  $$
  \|\psi\|_{W^{l,p}(\bL^1_0)}\leq
  \|g\|_{W^{l,p}(\bL^2_0)}+\|\varphi\|_{W^{l,p}(\bL^1_0)}<\infty
  $$ 
  since, by hypothesis (see Problem~1), $\varphi\in W^{l,p}(\bL^1_0)$.

  Consider now again the function $g$ used to prove point~(1).
  Then $\partial_x g(x,y)$ provides an example of function in
  $\Theta_{0,1,\infty}$ but not in $\Theta_{0,\infty}$ and 
  Point~(4) is then proved by considering, more generally, $g^{1/p}$ 
  and by integrating it with respect to $x$ as in point~(1). 
  Similar examples can also be obtained, for example, via the function 
  $g(x,y)=\left(x^2+y^2\right)^{-\alpha}$, $\alpha>0$, which belongs to 
  $\Theta_{l,p,\infty}$ for $0\leq \alpha\leq\frac{1}{p}-\frac{l}{2}$.
  Note that, for $\alpha<\frac{1}{2}$, we have also that $g\in\Theta_{0,\infty}$.
\end{proof}
We now go back to the global solution of the cohomological equation.

\noindent{\bf The Hamiltonian case.}
%
When $\xi$ is Hamiltonian with respect to the standard smooth structure, 
the projection $C^r(\cF_\xi)\to C^r(U)$, given by the restriction of a $C^r$ function
on $\cF_\xi$ to any open set $U\subset\cF_\xi$, is surjective (e.g. see~\cite{HR57}), 
so that the regularity of $\psi$ in Problem~1 only depends on the germ of 
$g$ at $(0,0)$. 

For every pair of adjacent separatrices $s_1,s_2\in\cF_\xi$ separated 
by $\gamma(s_1,s_2)\in\cG$, we denote by $a$ the common value of $F$ at every point
of the two separatrices and by $b$ the one of $G$ at every point of $\gamma$
and define the homomorphisms $\theta_{r,k,(a,b)}=\theta_{r,k}\circ T_{(a,b)}$
and $\theta_{l,p,k,(a,b)}=\theta_{l,p,k}\circ T_{(a,b)}$, where
$(T_{(a,b)}g)(x,y)=g(x-a,y-b)$.
%
\begin{theorem}
  \label{thm:Germs}
  Let $\xi$ be a planar Hamiltonian vector field, $F$ a generator of $\ker L_\xi$,
  $G$ a transversal foliation minimally separating $\cF_\xi$ and 
  $\xi'_F=\xi/L_\xi G$ the Hamiltonian vector field of $F$ 
  with respect to the symplectic form $dF\wedge dG$. Then,
  for all $k=0,\dots,\infty$, $r=0,\dots,k$, $l=0,\dots,k+3$ and $p\geq1$:
  \begin{enumerate}
  \item 
    $g\in L_{\xi'_\F}\left(C^r(\Rt)\right)\cap C^k(\Rt)$ 
    iff $[T_{(a_i,b_i)}(\Phi_\FG)_*g]_{S\fG^{r,k}(\bL^2_0)}\in \Theta_{r,k}$ 
    for all $i$;
  \item $g\in L_{\xi'_\F}\left(C^{k+1}(\Rt)\right)$ iff 
    $(\Phi_\FG)_*g$ is $C^{k+1}$ in the first variable\\ and
    $[T_{(a_i,b_i)}(\Phi_\FG)_*g]_{S\fG^{k+1,k}(\bL^2_0)}\in \Theta_{k+1,k+1}$ for all $i$;
  \item $g\in L_{\xi'_\F}(W^{l,p}_{loc}(\Rt))\cap C^k(\Rt)$ iff 
    $[T_{(a_i,b_i)}(\Phi_\FG)_*g]_{S\fW^{l,k,p}(\bL^2_0)}\in \Theta_{l,p,k}$ for all $i$.
  \end{enumerate}
\end{theorem}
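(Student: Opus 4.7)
The plan is to reduce the global solvability of $L_{\xi'_\F} f = g$ on $\Rt$ to a family of purely local extension problems---one for each pair of adjacent separatrices of $\cF_\xi$---and then invoke the definitions of $\Theta_{r,k}$ and $\Theta_{l,p,k}$ essentially directly. Since $\xi$ is Hamiltonian and $\cG$ minimally separates $\cF_\xi$, Theorem~\ref{thm:minsep} guarantees that $\Phi_\FG=(F,G)$ is a locally injective $C^\infty$ immersion sending the leaves of $\cF_\xi$ (resp.\ $\cG$) to vertical (resp.\ horizontal) lines. A direct computation using $L_{\xi'_\F} F = 0$ and $L_{\xi'_\F} G = 1$ shows that in the atlas induced by $\Phi_\FG$ the operator $L_{\xi'_\F}$ coincides with $\partial_y$, so the equation becomes $\partial_y \tilde f = \tilde g$, which is solved by integration along each leaf with freely prescribable initial data on any transversal slice.

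The key observation is that this integration can be carried out independently on each canonical region of $\cF_\xi$. Since the leaf space of a Hamiltonian foliation is a (non-Hausdorff) smooth manifold admitting partitions of unity (\`a la Haefliger--Reeb), any choice of initial values on transversals crossing a given canonical region can be lifted to a global $C^r$ function, provided the compatibility conditions at the pairs of adjacent separatrices are met. Hence the only obstructions to the existence of a global $C^r$ (resp.\ $W^{l,p}_{loc}$) solution occur at the branch points of the leaf space, one per pair of adjacent separatrices.

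By Theorem~\ref{thm:NC}, each such pair $(s_1,s_2)$ admits a normal chart in which the local geometry matches that of $\bL^2_0$; after translating the corresponding base point $(a_i,b_i)$ to the origin via $T_{(a_i,b_i)}$, the extension of the solution from one side of the branch point to the other becomes precisely the setting of Problem~1 applied to $T_{(a_i,b_i)}(\Phi_\FG)_* g$. By the very definitions of $\theta_{r,k}$ (resp.\ $\theta_{l,p,k}$) and of their kernels $\Theta_{r,k}$ (resp.\ $\Theta_{l,p,k}$), the existence of a $C^r$ (resp.\ $W^{l,p}_{loc}$) extension at that branch point is equivalent to the singular germ $[T_{(a_i,b_i)}(\Phi_\FG)_* g]$ belonging to $\Theta_{r,k}$ (resp.\ $\Theta_{l,p,k}$). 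This directly yields (1) and (3). For (2), where $r=k+1$ and no regularity loss is permitted, the extra requirement that $(\Phi_\FG)_* g$ be $C^{k+1}$ in the first variable is necessary and sufficient because vertical integration of a function only $C^k$ in the transversal direction produces at best a $C^k$ result.

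The main technical obstacle I anticipate is verifying that the (possibly countably many) extension conditions at distinct branch points are genuinely \emph{independent}, so that they can all be satisfied simultaneously without back-propagating incompatibilities. This requires that an admissible choice of initial data on one transversal does not force an inadmissible choice on a neighbouring one. The minimal separation property of $\cG$ (exactly one separating leaf per adjacent pair), combined with the function-extension property of Hamiltonian leaf spaces, handles this: one specifies initial data on a single transversal per canonical region and then adjusts them recursively to satisfy each compatibility condition, each adjustment being confined to a single canonical region and hence decoupled from the previously imposed ones.
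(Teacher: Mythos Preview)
Your proposal is correct and follows essentially the same route as the paper: reduce the global problem via normal charts (Theorem~\ref{thm:NC}) to the local extension problem at each pair of adjacent separatrices, then invoke the definitions of $\Theta_{r,k}$ and $\Theta_{l,p,k}$; your discussion of the independence of the conditions at distinct branch points, via the Haefliger--Reeb surjectivity of $C^r(\cF_\xi)\to C^r(U)$ in the Hamiltonian case, is in fact more explicit than what the paper writes. The one point you should add concerns part~(2): the paper checks that the condition ``$(\Phi_\FG)_*g$ is $C^{k+1}$ in the first variable'' is \emph{well-defined}, i.e.\ independent of the particular choice of $F$ and $G$, by observing that any other admissible pair has the form $(F'(F),G'(F,G))$; your justification (that vertical integration preserves only $C^k$ in the transversal variable) explains why the condition is needed but not why it is intrinsic.
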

\begin{proof}
  {\it 1.} If $g\in L^{(r)}_{\xi'_\F}$ then there is a $C^r$ solution $f$ to $L_\xi f=g$.
  In a normal chart $(x',y')$ of some neighborhood of any pair of 
  adjacent separatrices, the function 
  $\varphi(x')=\int_{-\epsilon}^\epsilon g(x',y')dy'\,:(-\delta,0)\to\bR$ equals 
  $f(x',\epsilon)$ modulo some function belonging to $C^r((-\delta,0])$, 
  i.e. $[\varphi]_{S\fG^r(\bL^1_0)}=0$.
  If, on the other side, $[T_{(a_i,b_i)}(\Phi_\FG)_*g]_{S\fG^r(\bL^2_0)}\in \Theta_r$ 
  for all pairs of adjacent separatrices $s_{1_i},s_{2_i}$ with transversals
  $\gamma_{1_i},\gamma_{2_i}$, then we can define any $C^r$ function of one
  of the transversals and extend the solution to the whole plane with the
  method of the characteristics. The condition 
  $[T_{(a_i,b_i)}(\Phi_\FG)_*g]_{S\fG^r(\bL^2_0)}\in \Theta_r$ grants that on every 
  separatrix we can extend the solution to a $C^r$ solution across the separatrix.
  The argument works similarly for point {\it 3} \emph{mutatis mutandis}.

  About point {\it 2}, the argument is the same but we must first prove that
  the property that $(\Phi_\FG)_*g$ is $C^{k+1}$ in the first variable does not
  depend on the particular choice of $F$ and $G$. The reason for this is that
  every other first-integral $F'$ of $\xi$ only depends on $F$, so that any
  other pair $(F',G')$, where $F'$ is a first-integral and $G'$ a transversal
  Hamiltonian for $\cF_\xi$, is such that $(F',G')=(F'(F),G'(F,G))$. 
  Hence, if $(\Phi_\FG)_*g$ is $C^{k+1}$ in the first argument for one particular
  choice of $F$ and $G$, it is so for every other choice.
\end{proof}
\begin{corollary}
  \label{thm:imagine}
  Under the hypotheses of Theorem~\ref{thm:Germs}, let $\cP$ be the set of all points 
  $(a,b)$ that separate pairs of adjacent separatrices of $(\Phi_\FG)_*\xi$ in 
  $\Phi_\FG(\Rt)$. Then the following inclusions hold:
  \begin{enumerate}
  \item $L_{\xi'_\F}(C^r(\Rt))\cap C^k(\Rt)\supset\Phi_\FG^*(C^r(\Rt)\cap C^k(\Rt\setminus\cP));$
  \item $L_{\xi'_\F}(C^{k+1}(\Rt))\supset\Phi_\FG^*(C^{k+1}(\Rt));$
  \item $L_{\xi'_\F}(W^{l,p}_{loc}(\Rt))\cap C^k(\Rt)\supset\Phi_\FG^*(W^{l,p}_{loc}(\Rt)\cap C^k(\Rt\setminus\cP))$.
  \end{enumerate}
\end{corollary}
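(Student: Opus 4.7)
The plan is to reduce the corollary directly to Theorem~\ref{thm:Germs} by observing that its three sufficient-and-necessary conditions become trivially satisfied as soon as the pushforward of $g$ to the target of $\Phi_\FG$ is already as regular as desired on a full neighborhood of every separation point. In other words, the three inclusions are the ``free'' direction of Theorem~\ref{thm:Germs}, obtained by exhibiting concrete candidate singular germs that happen to be the zero coset of the respective quotient.

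For \emph{(1)}, I would start with $h\in C^r(\Rt)\cap C^k(\Rt\setminus\cP)$ and set $g=\Phi_\FG^*h$. Around each $(a_i,b_i)\in\cP$, Theorem~\ref{thm:NC} supplies a normal chart in which $(\Phi_\FG)_*g$ is just $h$. The translated function $T_{(a_i,b_i)}h$ is $C^r$ on a full neighborhood of the origin and $C^k$ on its complement, so its left germ at $0$ sits in $\fG^{r,k}(\bL^2_0)$ and its class in $S\fG^{r,k}(\bL^2_0)=\fH^k(\bL^2_0)/\fG^{r,k}(\bL^2_0)$ is zero. Zero belongs to $\Theta_{r,k}=\ker\theta_{r,k}$, so applying Theorem~\ref{thm:Germs}\emph{(1)} at every separation point simultaneously yields $g\in L_{\xi'_\F}(C^r(\Rt))\cap C^k(\Rt)$.

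For \emph{(2)} and \emph{(3)} the same template works. Any $h\in C^{k+1}(\Rt)$ is in particular $C^{k+1}$ in the first variable, and its translate at each $(a_i,b_i)$ is $C^{k+1}$ on a closed neighborhood of $0$, hence represents the zero class in $S\fG^{k+1,k}(\bL^2_0)$, which a fortiori lies in $\Theta_{k+1,k+1}$. Likewise, for $h\in W^{l,p}_{loc}(\Rt)\cap C^k(\Rt\setminus\cP)$ the translated function lies in $\fW^{l,p,k}(\bL^2_0)$, so its singular germ vanishes in $S\fW^{l,p,k}(\bL^2_0)$ and hence in $\Theta_{l,p,k}$.

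There is no real obstacle: the corollary is almost a tautological consequence of Theorem~\ref{thm:Germs}, amounting to the observation that membership in the denominator of a quotient forces membership in the kernel of any homomorphism out of that quotient. The only bookkeeping step is verifying that the global regularity of $h$ on the target side of $\Phi_\FG$ descends to a one-sided germ of the prescribed class at each $(a_i,b_i)$; this is immediate from items~\emph{3} and \emph{4} of Theorem~\ref{thm:NC}, together with the translation invariance built into the definitions of $\fG^{r,k}$ and $\fW^{l,p,k}$.
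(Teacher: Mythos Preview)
Your proposal is correct and matches the paper's intent: the corollary is stated without proof, immediately after Theorem~\ref{thm:Germs}, precisely because it follows by the trivial observation you spell out---that a function already lying in the denominator subring $\fG^{r,k}$ (respectively $\fW^{l,p,k}$) represents the zero class in the quotient $S\fG^{r,k}$ (respectively $S\fW^{l,p,k}$), hence lies in any kernel. The only point worth making explicit, which you gesture at in your last paragraph, is that the separation points $(a_i,b_i)\in\cP$ are \emph{not} in $\Phi_\FG(\Rt)$ (cf.\ Theorem~\ref{thm:NC}(3)--(4)), so $g=\Phi_\FG^*h$ is genuinely $C^k$ on all of $\Rt$ whenever $h\in C^k(\Rt\setminus\cP)$.
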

%
Next theorem shows in particular that the solvability of the cohomological equation
is stable under small perturbations of its right hand side:
\begin{theorem}
  \label{thm:OpCl}
  Let $\xi\in\vf$ be Hamiltonian. Then:
  \begin{enumerate}
    \item $L_\xi(\Cr)\cap C^k(\Rt)$ is a clopen subset of $C^k(\Rt)$ 
      for all $r=0,\dots,k$ and $k=0,\dots,\infty$. In particular,
    $L_\xi(C^\infty(\Rt))$ is clopen in $C^\infty(\Rt)$;
    \item $L_\xi(C^{k+1}(\Rt))$, is neither open or closed in $C^k(\Rt)$ 
      for all $k=0,1,\dots$;
    \item $L_\xi(W^{l,p}_{loc})\cap C^k(\Rt)$ is a clopen subset of $C^k(\Rt)$ 
      for all $l=0,\dots,k+1$, if $p>2$, and for all $l=0,\dots,k+2$, 
      if $1\leq p\leq2$, for all $k=0,1,\dots$.
  \end{enumerate}
\end{theorem}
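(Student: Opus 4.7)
The plan is to deduce the statement from Theorem~\ref{thm:Germs}, which characterises membership in $L_\xi(C^r(\Rt)) \cap C^k(\Rt)$ and $L_\xi(W^{l,p}_{loc}(\Rt)) \cap C^k(\Rt)$ by a family of singular-germ conditions at the critical points $(a_i,b_i)$ associated to pairs of adjacent inseparable separatrices of $\cF_\xi$. Since $L_\xi$ is linear, both of these sets are linear subspaces of $C^k(\Rt)$; since translation is continuous in the Whitney topology, to prove the clopen property in parts~(1) and~(3) it suffices to exhibit a Whitney neighborhood of $0$ contained in the subspace. Openness at every other point of the subspace will then follow by translation, and the complement, being a disjoint union of open translates, is open as well, giving closedness.

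For parts~(1) and~(3), the core of the proof is to choose a continuous positive function $\epsilon:\Rt\to(0,\infty)$ whose associated Whitney neighborhood of $0$ is contained in the subspace. The Whitney topology allows $\epsilon$ to decay as rapidly as desired along any closed neighborhood of the separatrix set $\cS_\xi$; by choosing this decay to be fast enough, every admissible $h$ together with its first $k$ derivatives vanishes to sufficient order along the separatrices, so that the pushforward $(\Phi_\FG)_*h$ extends as a $C^r$ function (resp.\ as a $W^{l,p}_{loc}$ function) across each critical point $(a_i,b_i)$. Consequently the singular germ of $(\Phi_\FG)_*h$ at each $(a_i,b_i)$ lies trivially in $\Theta_{r,k}$ (resp.\ in $\Theta_{l,p,k}$ via Theorem~\ref{thm:Theta}(3)), giving the claim.

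For part~(2), the failure of openness and closedness is explained by Theorem~\ref{thm:Germs}(2): beyond the germ condition, solvability in $C^{k+1}(\Rt)$ requires $(\Phi_\FG)_*g$ itself to be $C^{k+1}$ in the first variable, which is not a property stable under Whitney-$C^k$ perturbations in either direction. For non-openness, any $g\in L_\xi(C^{k+1}(\Rt))$ admits an arbitrarily small $C^k$ Whitney perturbation $h$, constructed in a normal chart, such that the $(k+1)$-st partial of $(\Phi_\FG)_*(g+h)$ in the first variable fails to exist at some $(a_i,b_i)$, so that $g+h\notin L_\xi(C^{k+1})$. For non-closedness, a sequence $g_n\in L_\xi(C^{k+1}(\Rt))$ is produced by smoothing, in the first variable, a target function $g\notin L_\xi(C^{k+1})$ and then converging back to $g$ in $C^k$. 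Explicit realisations can be built on the model of the functions appearing in the proof of Theorem~\ref{thm:Theta}.

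The main technical obstacle is to quantify, in parts~(1) and~(3), how fast the control function $\epsilon$ must decay along $\cS_\xi$ to force the pushforward of every admissible $h$ to extend with the required regularity at each $(a_i,b_i)$ simultaneously, especially when $\cS_\xi$ has intricate topology (for instance as in Example~\ref{ex:dense}). This requires balancing the potentially unbounded growth of the derivatives of $\Phi_\FG^{-1}$ near the critical points against the rate of vanishing imposed on $h$ along the separatrices.
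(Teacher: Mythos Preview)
Your reduction via linearity---show $A$ contains a Whitney neighborhood of $0$, then use that an open additive subgroup is automatically closed because its cosets are open---is valid and cleaner than the paper's separate treatment of openness and closedness. Your handling of part~(2) also matches the paper's.

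There is, however, a real gap in your openness argument for parts~(1) and~(3). You propose to force the decay of $\epsilon$ ``along a closed neighborhood of the separatrix set $\cS_\xi$'' so that $(\Phi_\FG)_*h$ extends across each $(a_i,b_i)$. But the preimage under $\Phi_\FG$ of a punctured neighborhood of $(a_i,b_i)$ is \emph{not} a neighborhood of the separatrices: it is an unbounded region escaping to infinity \emph{between} $s_1$ and $s_2$, along the leaves. In the model of Section~\ref{ex:TodorH}, for instance, the point $(-N,0)$---far from both separatrices $y=\pm1$---is sent by $\Phi_\FG$ to $(-e^{-N},0)$, arbitrarily close to the critical point $(0,0)$. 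An $\epsilon$ that stays bounded below on $\{y=0\}$ therefore admits perturbations (e.g.\ $h(x,y)=\sin(x)\chi(y)$ with $\chi$ supported away from $y=\pm1$) for which $(\Phi_\FG)_*h$ oscillates near the critical point and gains no regularity. To rescue your scheme the decay must be imposed \emph{at infinity in the canonical region}, fast enough to beat the blow-up of the derivatives of $\Phi_\FG^{-1}$---exactly the obstacle you name but leave open.

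The paper proceeds differently. Its organizing observation is that the germ condition at each $(a_i,b_i)$ depends only on the restriction of $g$ to the complement of an arbitrary compact set (because $\Phi_\FG$ carries compacta away from the critical points). Closedness then follows immediately from the standard fact that a Whitney-$C^k$ convergent sequence eventually agrees with its limit outside a fixed compact. For openness the paper argues, somewhat tersely, that a $C^k$-bounded perturbation leaves the relevant limits unchanged; read through the same lens, this amounts to choosing $\epsilon$ with suitable decay \emph{at infinity} rather than near $\cS_\xi$. That reformulation also dissolves the chain-rule bookkeeping you flag, since the only control needed is on the values of $h$ (and its $(x,y)$-derivatives) outside compacta, not on derivatives of $\Phi_\FG^{-1}$.
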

\begin{proof}
  {\it 1.} Set $A=L_\xi(\Cr)\cap C^k(\Rt)$ and let $g\in A$.
  Every positive function $\epsilon\in C^0(\Rt)$ defines
  a neighborhood $U_\epsilon$ of $g$ in the strong $C^k$ topology as the set 
  of all $C^k$ functions $g'$ such that 
  $$
  |g'(x,y)-g(x,y)|+\|D_{(x,y)}(g'-g)\|+\dots+\|D^k_{(x,y)}(g'-g)\|\leq\epsilon(x,y)
  $$
  for every $(x,y)\in\Rt$. If $\eta>0$ is bounded then, in any normal chart, 
  $$
  \lim_{x\to0^-}\bigg|\int_{-\eta}^\eta\partial^k_x g(x-a,y-b)dy\bigg|<\infty\hbox{ iff }
  \lim_{x\to0^-}\bigg|\int_{-\eta}^\eta\partial^k_x\left(g(x-a,y-b)+\epsilon(x,y)\right)dy\bigg|
  <\infty,
  $$
  where $(a,b)$ are the coordinates of the point that separates the two separatrices
  in the normal chart. Hence, in all normal charts, 
  $\theta_{r,k}\circ T_{(a,b)}([g])=\theta_{r,k}\circ T_{(a,b)}([g'])$ for all 
  $g'\in U_\epsilon$, namely $U_\epsilon\subset A$, namely $A$ is open.

  Now, let $\{g_n\}$ a sequence of elements of $A$ converging to $g\in\Cr$ 
  in the strong topology. Then, almost all the $g_n$ coincide with $g$ outside of
  some compact set and, therefore, in any normal chart, 
  $$
  \lim_{x\to0^-}\int_{-\epsilon}^\epsilon\partial^k_x g(x,y)dy=
  \lim_{x\to0^-}\int_{-\epsilon}^\epsilon\partial^k_x g_n(x,y)dy,
  $$
  i.e. $\theta_r([g])=\theta_r([g_n])$, for almost all $n$, namely
  $A$ is closed.

  {\it 2.} In case of $L_\xi(C^{k+1}(\Rt))$, it is enough to observe that 
  the property of being $C^{k+1}$ in the first variable in every 
  normal chart is clearly destroyed 
  by a generic $C^k$ small perturbation and is not preserved by
  $C^k$ convergence unless $k+1=k$, namely unless $k=\infty$.

  {\it 3.} The proof is the same as in point 1. The limits to the values 
  of $l$ are due to the fact that, for larger values, $W^{l,p}_{loc}$ functions
  are at least $C^{k+1}$ and therefore their set is neither open or closed
  in the $C^k$ topology.
\end{proof}
\noindent{\bf The non-Hamiltonian case.}
The method we developed for Hamiltonian vector fields is much less powerful
when $\xi$ is not Hamiltonian.
The main reason for this is that, in this case, the projection 
$C^r(\cF_\xi)\to C^r(U)$ that sends $C^r$ functions $f$ to their 
restriction $f|_U$ to an open set $U\subset\cF_\xi$ is not always 
surjective when $U$ contains separatrices~\cite{HR57}.
Hence there are constraints to the choice of 
the function $\varphi$ of Problem~1 since, if $\varphi$ does not 
extends to a global $C^r$ first integral of $\xi$, then the extension 
of the solution via the method of characteristics will sooner or
later diverge on some of the separatrices. 

We start by assuming that $\xi$ is of finite type and recall the following
property: 
\begin{proposition}
  \label{thm:phi}
  If $\xi\in\vf$ is not Hamiltonian, the differential of any generator of 
  $\ker L^{(r)}_\xi$, $r\geq1$, is zero on some of the separatrices of $\xi$. 
  Similarly, the first derivative of every solution of $L_\xi^{(r)}f=g$
  on some of the separatrices is determined by $g$ modulo constants.
\end{proposition}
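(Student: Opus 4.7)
The plan is to attack the two assertions separately, relying on the leaf-space description of $\cF_\xi$ set up in Sections 2--3. The key structural input is that $\xi$ is not intrinsically Hamiltonian precisely when no smooth first integral is regular, so for any generator $F\in \ker L^{(r)}_\xi$ the set $Z(F)=\{p\in\Rt:dF(p)=0\}$ is non-empty. The content of the proposition is the sharper claim that $Z(F)$ meets the separatrix set.

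For the first statement I would first show that $Z(F)$ is saturated by $\cF_\xi$. Indeed, $L_\xi F=0$ together with the commutation $L_\xi d=dL_\xi$ gives $L_\xi dF=0$, so the flow of $\xi$ preserves $dF$ and hence its zero locus; alternatively, near any non-separatrix leaf the foliation is a product and $F$ factors through the transversal coordinate, so $\{dF=0\}$ is locally a union of leaves. Second, I would exploit the generator condition: on the complement of $\cS_\xi$ the leaf space is Hausdorff (by definition of separatrix) and locally an interval $I$, and $F$ descends to a continuous function $\tilde F\colon I\to\bR$. The generator property requires $\tilde F$ to be injective on $I$, because otherwise two non-inseparable leaves would share the same $F$-value and hence would not be distinguished by any $h\circ F$, contradicting the fact that $F$ generates $\ker L^{(r)}_\xi$. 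An injective continuous function on an interval is strictly monotonic, hence $d\tilde F\neq 0$, so $dF$ is nowhere vanishing on the complement of $\cS_\xi$. Combining the two observations with $Z(F)\neq\emptyset$ forces $Z(F)\cap\cS_\xi\neq\emptyset$.

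For the second statement I would argue that the first part controls the behavior of all first integrals at the distinguished separatrix. If $f_1,f_2$ are two $C^r$ solutions of $L_\xi f=g$, then $H=f_1-f_2\in\ker L^{(r)}_\xi$, so by the generator structure $H=h\circ F$ for some $C^r$ function $h\colon\bR\to\bR$ on each canonical region, giving $dH=h'(F)\,dF$. On the separatrix $s\subset\cS_\xi$ where $dF\equiv 0$ (from the first part) we therefore have $dH|_s\equiv 0$. In a normal chart around $s$ with $\xi=\partial_x$ and $s=\{y=0\}$, writing $\partial_x f = g$ gives $\partial_x f(x,0)=g(x,0)$ and, by differentiating in $y$, $\partial_y f(x,0)=\int_{x_0}^x\partial_y g(s,0)\,ds+\partial_y f(x_0,0)$; the only ambiguity between solutions is the additive constant $\partial_y f(x_0,0)$, because any further freedom would produce an $H$ with $dH|_s\neq 0$, contradicting the vanishing just established. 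This is exactly the content of ``determined by $g$ modulo constants''.

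The most delicate step is the injectivity-on-transversals argument for a generator, since it requires unpacking what ``generator of $\ker L^{(r)}_\xi$'' means on the possibly complicated non-Hausdorff leaf space $\cF_\xi$ of a finite-type vector field: in each canonical region the leaves are parametrized by an interval, and the generator condition translates into separation of non-inseparable leaves, which in turn yields strict monotonicity along transversals. Once this is clear, the rest of the proof is a direct translation between the local normal-chart picture (Theorem~\ref{thm:NC}) and the global statement.
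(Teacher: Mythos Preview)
Your overall strategy is coherent and genuinely different from the paper's, but Part~1 contains a real gap. The implication ``strictly monotonic, hence $d\tilde F\neq 0$'' is false: $t\mapsto t^3$ is the standard counterexample. From injectivity of $\tilde F$ on a canonical region you only get monotonicity, so the argument that $Z(F)\subset\cS_\xi$ breaks exactly here. The gap can be closed, but it requires using the generator hypothesis at the level of derivatives rather than values: if $dF$ vanished on a non-separatrix leaf $\ell$, take a saturated product neighbourhood of $\ell$; the transversal coordinate is a local $C^r$ first integral with non-vanishing differential on $\ell$, and since $\ell$ is a Hausdorff point of $\cF_\xi$ a bump-function argument extends it to a global $H\in\ker L^{(r)}_\xi$. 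Writing $H=h\circ F$ with $h\in C^r$ would then force $dH=h'(F)\,dF=0$ on $\ell$, a contradiction. This extension step is the missing ingredient, and it is not automatic---it is exactly the surjectivity of $C^r(\cF_\xi)\to C^r(U)$ for $U$ avoiding branch points. Your Part~2 is essentially correct but inherits the gap, since it rests on $dH|_s=0$ from Part~1.

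The paper's route is entirely different and never tries to show $dF\neq 0$ off $\cS_\xi$. It works directly with the holonomy between an adjacent pair $s_1,s_2$: choosing transversals $t_1,t_2$ with natural parameters $\eta_1,\eta_2$, the non-Hamiltonian hypothesis is encoded as $\eta_1(\eta_2)=\eta_2^\alpha+O(\eta_2^\beta)$ with $\alpha\neq 1$, and the chain rule forces the transverse derivative of any $C^1$ first integral to vanish on the ``slow'' separatrix. The same computation, carried through the method of characteristics, produces an explicit constraint on $\varphi'(0)$ in terms of $g$ for the second statement. The paper's approach is more computational and yields finer information (it identifies \emph{which} separatrix carries the constraint; cf.\ the Remark following the proof), whereas your leaf-space argument, once repaired, is more structural and does not rely on any asymptotic normal form for the holonomy.
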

\begin{proof}
Since $\xi$ is non-Hamiltonian, then the foliation $\cF_\xi$ has the following
property: there exist two adjacent separatrices $s_1$ and $s_2$ such that,
taken any two corresponding transversal segments $t_1$ and $t_2$, parametrized 
by the natural parameters $\eta_1$, $\eta_2$ with respect to the Euclidean 
metric in such a way that $\eta_i=0$ is the coordinate of $s_i\cap t_i$
and that both coordinates are positive for the points of $t_1$ and $t_2$
inside $\pi^{-1}_{\cF_\xi}(\pi^{-1}_{\cF_\xi}(t_1)\cap \pi^{-1}_{\cF_\xi}(\pi^{-1}_{\cF_\xi}(t_2)$,
then $\eta_1(\eta_2)=\eta_2^\alpha+O(\eta_2^\beta)$, with $\beta>\alpha$
and $\alpha\neq1$.
In other words, the leaves of $\cF_\xi$ approach the two separatrices
at different rates.
Assume now, for the argument's sake, that $\alpha<1$, and define a
germ of a function $\varphi(\eta_1)$ on $t_1$. Then, on $t_2$, this function
becomes $\psi(\eta_2)=\varphi(\eta_1(\eta_2))=\varphi(\eta_2^\alpha+O(\eta_2^\beta))$,
so that
$$
\frac{d\psi}{d\eta_2}\bigg|_{\eta_2}=\frac{\alpha}{\eta_2^{1-\alpha}}
\frac{d\varphi}{d\eta_1}\bigg|_{\eta_2^\alpha+O(\eta_2^\beta)}+O(\beta-1)
$$
and therefore we must have $\frac{d\varphi}{d\eta_1}\big|_{\eta_1=0}=0$
in order to be able to extend $\varphi$ to a $C^1$ function beyond $t_2$.

Regarding the second part, through the method of characteristics we have that
$$
\psi(\eta_2)=
\int\limits_{0}^{T(\eta_1(\eta_2),\eta_2)}[(\Phi_\xi^t)^*g](x(\eta_2),y(\eta_2))dt 
+\varphi(\eta_1(\eta_2)),
$$
where $\Phi_\xi^t$ is the flow of $\xi$. Hence
$$
\frac{d\psi}{d\eta_2}\bigg|_{\eta_2}=
\left[\frac{\alpha}{\eta_2^{1-\alpha}}\partial_1 T(\eta_1(\eta_2),\eta_2)
+\partial_2 T(\eta_1(\eta_2),\eta_2)\right]\left[(\Phi_\xi^{T(\eta_1(\eta_2),\eta_2)})^*g\right](x(\eta_2),y(\eta_2))+
$$
$$
+\frac{\alpha}{\eta_2^{1-\alpha}}\frac{d\varphi}{d\eta_1}\bigg|_{\eta_2^\alpha+O(\eta_2^\beta)}+O(\beta-1)
$$
and therefore we must have
$$
\frac{d\varphi}{d\eta_1}\bigg|_{\eta_1=0}+\lim_{\eta_2\to0}
\partial_1 T(\eta_1(\eta_2),\eta_2)[(\Phi_\xi^{T(\eta_1(\eta_2),\eta_2)})^*g](x(\eta_2),y(\eta_2))
=0
$$
in order to be able to extend $\varphi$ to a $C^1$ function beyond $t_2$.
\end{proof}
\begin{remark}
  Note that the derivative of $\psi(\eta_2)$ is not necessarily null 
  at $\eta_2=0$.
\end{remark}
%
Let $F\in\ker L^{(r)}_\xi$ and let $\cG=\{dG=0\}$ be any 
Hamiltonian transversal foliation which minimally separates $\cF_\xi$.
Then $\Phi_\FG=(F,G):\Rt\to\Rt$ is a $C^r$ locally injective map whose
rank is 1 on some of the separatrices. In order to make $F$ and $G$ both
regular, we switch to the $\Rt_\FG$ differential structure of the plane.
Since both $F$ and $G$ are $C^r$, then $C^r(\Rt_\FG)\subsetneq C^r(\Rt)$.
By repeating all steps as in the previous section, we get the following,
weaker, result
\begin{theorem}
  \label{thm:GermsNH}
  Let $\xi$ be a planar vector field of finite type, $F$ a generator of 
  $\ker L^{(r)}_\xi$,
  $G$ a $C^r$ transversal foliation minimally separating $\cF_\xi$ and 
  $\xi'_F=\xi/L_\xi G$ the Hamiltonian vector field of $F$ 
  with respect to the symplectic form $dF\wedge dG$. Then,
  for all $k=0,\dots,\infty$, $r=0,\dots,k$, $l=0,\dots,k+3$ and $p\geq1$:
  \begin{enumerate}
  \item $g\in L_{\xi'_\F}\left(C^r(\Rt)\right)\cap C^k(\Rt)$ if 
    $[T_{(a_i,b_i)}(\Phi_\FG)_*g]_{S\fG^{r,k}(\bL^2_0)}\in \Theta_{r,k}$ for all $i$;
  \item $g\in L_{\xi'_\F}\left(C^k(\Rt)\right)$ if 
    $(\Phi_\FG)_*g$ is $C^{k+1}$ in the first variable\\ and
    $[T_{(a_i,b_i)}(\Phi_\FG)_*g]_{S\fG^{k+1,k}(\bL^2_0)}\in \Theta_{k+1,k+1}$ for all $i$;
  \item $g\in L_{\xi'_\F}(W^{l,p}_{loc}(\Rt))\cap C^k(\Rt)$ if 
    $[T_{(a_i,b_i)}(\Phi_\FG)_*g]_{S\fW^{l,k,p}(\bL^2_0)}\in \Theta_{l,p,k}$ for all $i$.
  \end{enumerate}
\end{theorem}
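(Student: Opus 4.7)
The plan is to reproduce the argument of Theorem~\ref{thm:Germs}, working in the alternative differential structure $\Rt_\FG$. In that structure both $F$ and $G$ become $C^r$ and regular, the map $\Phi_\FG=(F,G)$ is a local diffeomorphism onto its image, and $\xi'_\F$ is Hamiltonian with respect to $dF\wedge dG$. Theorems~\ref{thm:NC} and~\ref{thm:minsep} therefore apply verbatim in $\Rt_\FG$: around every pair of adjacent separatrices of $\cF_\xi$ we have a normal chart in which $\xi'_\F$ reduces to $\partial_y$ and the cohomological equation reduces to Problem~1.

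Given the germ hypothesis, I would construct a solution by the method of characteristics on each canonical region of $\Phi_\FG(\Rt)$, exactly as in the proof of Theorem~\ref{thm:Germs}: pick a transversal $t$, fix an admissible initial datum $\varphi$ on $t$, and integrate $g$ along the leaves. The germ condition at each separating point $(a_i,b_i)$ ensures that the resulting local solutions patch together across every pair $s_{1_i},s_{2_i}$ of inseparable leaves with the required regularity ($C^r$, $C^k$, or $W^{l,p}_{loc}$ respectively, depending on which of $\Theta_{r,k}$, $\Theta_{k+1,k+1}$, $\Theta_{l,p,k}$ is invoked). The ``$C^{k+1}$ in the first variable'' hypothesis in point~\textit{2} is independent of the particular generator $F$ for the same reason as in Theorem~\ref{thm:Germs}: any other admissible pair $(F',G')$ factors as $(F'(F),G'(F,G))$, so the regularity in the first argument is intrinsic.

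The main obstacle, and the reason why only one implication appears in each point, is the non-surjectivity of the restriction $C^r(\cF_\xi)\to C^r(U)$ for open saturated sets $U$ meeting separatrices, codified by Proposition~\ref{thm:phi}: in the non-Hamiltonian case the initial datum $\varphi$ on a transversal can no longer be prescribed arbitrarily, since its values on leaves approaching inseparable separatrices at different rates must be tuned so that the solution remains $C^r$ across every separatrix. To complete the sufficiency argument I would take $\varphi$ to be the restriction of a globally defined $C^r$ first integral on a collection of pairwise disjoint transversals, one per canonical region, and verify that with this admissible choice the $\Theta$-conditions still guarantee the patching at every separating point. The converse fails because, even when a global solution does exist, the rigidity imposed by Proposition~\ref{thm:phi} can force the local germ of the solution at each separatrix into a proper subclass of the germs controlled by $\Theta$, so the $\Theta$-conditions alone need not be necessary.
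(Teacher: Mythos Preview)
Your core plan is the paper's: pass to the differential structure $\Rt_\FG$, where $F$ and $G$ are both regular and $\xi'_F$ is genuinely Hamiltonian, and then rerun Theorem~\ref{thm:Germs} verbatim. Since $C^r(\Rt_\FG)\subsetneq C^r(\Rt)$, a $C^r(\Rt_\FG)$ solution is automatically a $C^r(\Rt)$ solution, giving sufficiency.

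Your third paragraph, however, muddles where the obstruction lies. In $\Rt_\FG$ the restriction $C^r(\cF_\xi)\to C^r(U)$ \emph{is} surjective (the setup is Hamiltonian there), so no special care in choosing $\varphi$ is needed for the forward direction; you may take any $C^r$ initial datum, just as in Theorem~\ref{thm:Germs}. The non-surjectivity in the original structure is not an obstacle to sufficiency at all---it is precisely what breaks necessity. The paper's explanation is simpler than yours: the $\Theta$-conditions are equivalent to solvability in $C^r(\Rt_\FG)$, but the cohomological equation can admit $C^r(\Rt)$ solutions that lie outside $C^r(\Rt_\FG)$ (their $r$-th derivatives blow up, in the $\Phi_\FG$-coordinates, on the separatrices where $dF=0$; cf.\ Proposition~\ref{thm:phi} and the worked example in Section~\ref{ex:TodorNH}). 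Your sentence about rigidity ``forcing the germ of the solution into a proper subclass of the germs controlled by $\Theta$'' reads backwards: that would make the $\Theta$-conditions necessary, not the reverse.
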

In this case the conditions are sufficient but not necessary because
the cohomological equation can have $C^r(\Rt)$ solutions which do not
belong to $C^r(\Rt_\FG)$. In $\Rt_\FG$, such solution look like $C^r$ 
functions whose derivatives of order $r$ diverge on some of the separatrices
where $dF=0$ (see Proposition~\ref{thm:phi}). 
Nevertheless, Theorem~\ref{thm:GermsNH} is enough to extend
Corollary~\ref{thm:imagine} to vector fields of finite type:
\begin{corollary}
  \label{thm:imagineNH}
  Under the hypotheses of Theorem~\ref{thm:GermsNH}, let $P$ be the set of all points 
  $(a,b)$ that separate pairs of adjacent separatrices of $(\Phi_\FG)_*\xi$ in 
  $\Phi_\FG(\Rt)$. Then the following inclusions hold:
  \begin{enumerate}
  \item $L_{\xi'_\F}(C^r(\Rt))\cap C^k(\Rt)\supset\Phi_\FG^*(C^r(\Rt)\cap C^k(\Rt\setminus P));$
  \item $L_{\xi'_\F}(C^{k+1}(\Rt))\supset\Phi_\FG^*(C^{k+1}(\Rt));$
  \item $L_{\xi'_\F}(W^{l,p}_{loc}(\Rt))\cap C^k(\Rt)\supset\Phi_\FG^*(W^{l,p}_{loc}(\Rt)\cap C^k(\Rt\setminus P))$.
  \end{enumerate}
\end{corollary}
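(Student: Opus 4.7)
The plan is to verify each of the three inclusions by pulling back a function $h$ from $\Phi_\FG(\Rt)$ and checking that $g=\Phi_\FG^* h$ satisfies the sufficient conditions of Theorem~\ref{thm:GermsNH}. The scheme consists of three steps: (i) confirm that $g\in C^k(\Rt)$; (ii) identify the pushforward $(\Phi_\FG)_* g$ near each branch point $(a_i,b_i)\in P$; (iii) verify the singular-germ criterion trivially from the global regularity of $h$.

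For step (ii) I invoke Theorem~\ref{thm:NC}(1): for each $(a_i,b_i)\in P$, in a normal chart around the corresponding pair of adjacent separatrices $s_1,s_2$ with transversals $t_1,t_2\in\cG$, the map $\Phi_\FG$ restricted to $U=\pi_\xi^{-1}(\pi_\xi(t_1))\cup\pi_\xi^{-1}(\pi_\xi(t_2))$ is a diffeomorphism onto its image. Consequently $(\Phi_\FG)_* g=h$ on $\Phi_\FG(U)$, and the left germ of $(\Phi_\FG)_* g$ at $(a_i,b_i)$ coincides with the left germ of $h$ at that point.

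For step (iii), the hypothesis on $h$ forces the translated germ $T_{(a_i,b_i)}h$ to be a representative of the ideal used to form the relevant singular-germ quotient. In case (1), $h\in C^r(\Rt)$ implies that $T_{(a_i,b_i)}h$ belongs to $\fG^{r,k}(\bL^2_0)$, so the class $[T_{(a_i,b_i)}(\Phi_\FG)_* g]_{S\fG^{r,k}(\bL^2_0)}$ is zero in the quotient and hence trivially in $\Theta_{r,k}$. Case~(2) follows identically from $h\in C^{k+1}(\Rt)$, which also furnishes the auxiliary requirement that $(\Phi_\FG)_* g$ be $C^{k+1}$ in the first variable. Case~(3) uses $h\in W^{l,p}_{loc}(\Rt)$ to place the germ in $\Theta_{l,p,k}$. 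In all three cases Theorem~\ref{thm:GermsNH} then yields the desired inclusion.

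The main obstacle is step (i): the composition $g=h\circ\Phi_\FG$ is immediately $C^k$ wherever $\Phi_\FG$ is a smooth local diffeomorphism and $h$ is $C^k$, but the separatrices on which $d\Phi_\FG$ has rank~$1$ require care. The key observation is that by Theorem~\ref{thm:NC}(3) every separatrix is mapped into $\{F=a_i\}\setminus\{(a_i,b_i)\}$ and thus into a region where $h$ is $C^k$ by the hypothesis $h\in C^k(\Rt\setminus P)$; working in the $\Rt_\FG$ differential structure, in which $\Phi_\FG$ is smooth everywhere, one transports this $C^k$ regularity back to $g$ on the standard $\Rt$ using the compatibility of $\Rt_\FG$ with $\Rt$ away from the separatrices together with the $C^r$ regularity of $h$ along the separatrices themselves. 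Once $g\in C^k(\Rt)$ is secured, the trivial singular-germ computation of step (iii) completes the proof.
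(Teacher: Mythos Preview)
Your approach---pull back $h$, observe that $(\Phi_\FG)_*g=h$ near each branch point so the relevant singular germ vanishes, then invoke Theorem~\ref{thm:GermsNH}---is correct and is precisely what the paper intends: it offers no separate argument, remarking only that Theorem~\ref{thm:GermsNH} suffices to carry Corollary~\ref{thm:imagine} over to the finite-type case. Your step~(i) is more labored than necessary, however: the rank of $d\Phi_\FG$ plays no role in the chain rule, so once you note (via Theorem~\ref{thm:NC}(3),(4)) that the branch points $(a_i,b_i)$ do not lie in $\Phi_\FG(\Rt)$ and hence $h$ is $C^k$ on the entire image, the smoothness of $\Phi_\FG$ alone gives $g=h\circ\Phi_\FG\in C^k(\Rt)$, with no need for the detour through $\Rt_\FG$ or the appeal to ``$C^r$ regularity of $h$ along the separatrices.''
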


The case when separatrices are not isolated is more pathological.
We briefly discuss here only the limit case, when separatrices are 
dense on the plane in such a way that $C^1(\cF_\xi)$ contains only 
the constant functions. For such a $\xi$, every $g\in L_\xi(C^r(\Rt))$ 
determines uniquely, modulo constants, the solution of the cohomological 
equation (and, therefore, its restriction to any line). 
In particular, the method of characteristics in this context can be applied
only to $C^0$ functions since, given a transversal $t$, 
a generic $\varphi\in C^r(t)$, $r\geq1$,
will lead eventually to some ineliminable divergence. 
In $\Rt_{\Phi_\FG}$, though, both $F$ ad $G$ are regular and the method of characteristic 
can be used to study the solvability of the cohomological equation in 
$C^r(\Rt_{\Phi_\FG})$ for all values of $r$. 

Regarding the existence of more regular solutions with respect to the 
standard differential structure, we recall that, as shown
in Proposition~\ref{thm:phi}, $g$ determines the derivative of the restriction 
of the solution of the equation $L^{(r)}_\xi f=g$ on a dense set $A_t$ of any 
transversal $t$. 
\begin{definition}
  We say that a $C^r$ function $\varphi$ on $t$ is \emph{$\xi$-compatible}
  with $g$ if its derivatives coincide with those induced by $g$, via $\xi$, 
  in all points of $A_t$. 
\end{definition}
We are lead therefore to the following result:
\begin{theorem}
  Let $\xi\in\vf$ be such that $\dim C^1(\cF_\xi)=1$ and for each $s_i\in\cS_\xi$
  select a transversal $t_i$. Then $g\in L_\xi(C^r(\Rt))\cap C^k(\Rt)$ iff, 
  for each transversal $t_i$, there exists a $C^r$ function $\varphi_i$ on $t_i$ 
  $\xi$-compatible with $g$.
\end{theorem}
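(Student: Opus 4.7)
The plan is to prove the biconditional by handling necessity and sufficiency separately, using the method of characteristics to reduce global solvability to a countable family of local extension problems, one at each separatrix.

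For necessity, I would fix $f \in C^r(\Rt)$ with $L_\xi f = g$ and set $\varphi_i := f|_{t_i}$. The restriction of a $C^r$ function to the smooth transversal curve $t_i$ is $C^r$, so the only point to check is $\xi$-compatibility, and this is immediate from Proposition~\ref{thm:phi}: the derivatives of any $C^r$ solution of $L_\xi f=g$ along $t_i$ are forced, on the dense set $A_{t_i} \subset t_i$, to coincide with those induced by $g$ through the flow of $\xi$, which is precisely the defining condition of $\xi$-compatibility.

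For sufficiency, I would pick an arbitrary index $i_0$, use $\varphi_{i_0}$ as initial data on $t_{i_0}$, and propagate by the method of characteristics to obtain a $C^r$ function $f$ on the saturated set $\pi_\xi^{-1}(\pi_\xi(t_{i_0}))$ that solves $L_\xi f = g$ there. I would then extend one separatrix at a time: for each $s_j$ at the boundary of the current domain, the propagated $f$ induces on $t_j$ a $C^r$ trace $\tilde\varphi_j$, and because $f$ satisfies $L_\xi f = g$, Proposition~\ref{thm:phi} forces $\tilde\varphi_j$ to be $\xi$-compatible with $g$ as well. The difference $\tilde\varphi_j - \varphi_j$ is then a $C^r$ function on $t_j$ whose first derivative vanishes on the dense set $A_{t_j}$ and hence everywhere by continuity, so $\tilde\varphi_j - \varphi_j$ is constant on $t_j$. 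Absorbing that constant into $\varphi_j$ (an operation that preserves $\xi$-compatibility) and propagating the shifted $\varphi_j$ by the method of characteristics from the opposite side of $s_j$ yields a $C^r$ extension of $f$ past $s_j$.

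The main obstacle will be the global coherence of this procedure when $\cS_\xi$ is dense in $\Rt$. I would enumerate $\cS_\xi = \{s_j\}_{j \in \bN}$ and carry out the extension inductively; the hypothesis $\dim C^1(\cF_\xi) = 1$ guarantees that once $\varphi_{i_0}$ is fixed the global solution has no remaining freedom, so at each stage the additive constant needed to glue across $s_j$ is uniquely pinned by the preceding stages and no inconsistency can accumulate. A density and continuity argument, using that the union of the already-extended saturated open sets is open and that the characteristic-propagated $f$ extends continuously to each new separatrix as it is added, then shows that the inductive limit defines a $C^r$ function on all of $\Rt$ solving $L_\xi f = g$.
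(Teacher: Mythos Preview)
The paper does not actually supply a proof of this theorem: it is stated as an immediate consequence of Proposition~\ref{thm:phi} and the definition of $\xi$-compatibility, with the words ``We are lead therefore to the following result'' and nothing further. Your outline is thus already more detailed than what the paper offers, and your overall strategy---necessity by restricting a global solution, sufficiency by propagating a compatible initial datum and gluing across separatrices---is the natural way to flesh out the paper's implicit reasoning.

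That said, one step in your sufficiency argument is too quick. You write that ``Proposition~\ref{thm:phi} forces $\tilde\varphi_j$ to be $\xi$-compatible with $g$,'' but Proposition~\ref{thm:phi} pins the derivative of a solution at a point $q$ on a separatrix $s$ only \emph{because} the solution is $C^r$ across the adjacent separatrix $s'$. Your partial solution $f$ lives only on the current saturated domain, and $s'$ may well lie on its boundary, so the proposition does not apply to $f$ as stated. The repair is short but should be made explicit: since $s$ lies in the current domain, $t_{i_0}$ also crosses $s$, and the $\xi$-compatibility of $\varphi_{i_0}$ at $t_{i_0}\cap s\in A_{t_{i_0}}$ is exactly the condition (from the proof of Proposition~\ref{thm:phi}) ensuring that $f$ \emph{does} extend $C^1$ across $s'$. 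With that in hand, $\tilde\varphi_j'(q)$ is indeed the value induced by $g$, your density argument shows $\tilde\varphi_j-\varphi_j$ is locally constant on $t_j\cap U_0$, and since $\varphi_j$ is $C^r$ on all of $t_j$ you obtain the $C^r$ extension across $s_j$. The inductive-limit step is then justified, as you indicate, by the rigidity $\dim C^1(\cF_\xi)=1$, which forces all gluing constants to be uniquely determined once the first one is fixed; you should also note that $t_j\cap U_0$ being connected (or handling components separately) is needed to pass from ``locally constant'' to ``constant.''
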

%
%
\section{Examples}
\label{sec:examples}
In this section we present four model examples.
\subsection{$\xi=2y\,\partial_x+(1-y^2)\,\partial_y$}
\label{ex:TodorH}
\begin{figure}
  \centering
  \includegraphics[width=5.5cm]{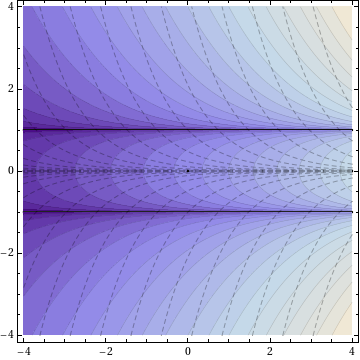}\hskip.5cm
  \includegraphics[width=5.5cm]{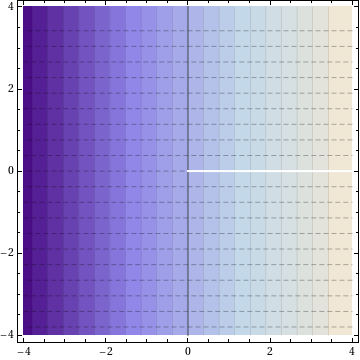}
  \caption{%
    \footnotesize [left] Foliations of the integral trajectories of 
    $\xi=2y\partial_x+(1-y^2)\partial_y$ (continuous lines) and
    $\eta=\partial_x-y\partial_y$ (dashed lines). These are tangent,
    respectively, to the level sets of the regular functions $F(x,y)=(y^2-1)e^x$ 
    and $G(x,y)=-2ye^x$. The leaves $y=\pm1$ are the only pair of inseparable 
    leaves in $\cF_\xi$, while $\cF_\eta\simeq\bR$ has no inseparable leaves.
    Note that, since $\xi$ is intrinsically Hamiltonian, $C^1(\cF_\xi)$ contains
    regular functions.
    [right] Image of $\cF_\xi$ and $\cF_\eta$ via $\Phi_\FG$. The leaves of $\cF_\xi$
    become vertical lines, those of $\cF_\eta$ horizontal ones.
    The image, under $\Phi_\FG$, of the sets $|y|<1$, $y=1$, $y=-1$ and $y=0$ 
    in the left picture are respectively represented, in the right one, 
    by the sets $(-\infty,0)\times\bR$, 
    $\{0\}\times(-\infty,0)$, $\{0\}\times(0,\infty)$ and $(-\infty,0)\times\{0\}$.
  }
  \label{fig:12}
\end{figure}
This vector field is Hamiltonian and invariant with respect to horizontal translations.
It is easy to see that its only separatrices are the straight lines $y=\pm1$. 
A regular first-integral of $L_\xi$ is the function
$F(x,y)=(y^2-1)e^x$ and a solution to the partial differential inequality $L_\xi G\neq0$
is given by $G(x,y)=-2ye^x$. 
It is easy to verify that $\cG=\{dG=0\}$ separates minimally 
$\cF_\xi$. Note that, in this particular case, $\Phi_\FG$ is globally injective.
In the normal chart $(\Rt,\Phi_\FG)$, the point that separates the two separatrices
has coordinates $(0,0)$. A straight calculation shows that
$$
\Omega_\FG = dF\wedge dG=2(1+y^2)e^{2x}dx\wedge dy
$$
and, correspondingly, 
$$
\xi'_{\F}=\Omega_\FG^{-1}(dF)=\frac{e^{-x}}{2(1+y^2)}\xi,\,\,\,
\xi'_{\G}=\Omega_\FG^{-1}(dG)=\frac{e^{-x}}{y^2+1}\left(\partial_x-y\partial_y\right)\,.
$$ 
By Proposition~6 in~\cite{DeL10c}, 
$(\Phi_\FG)_*\xi_\F=\partial_{y'}$ and $(\Phi_\FG)_*\xi_\G=\partial_{x'}$,
where we set $(x',y')=(F,G)$, and the cohomological equation writes,
in the normal chart $(\Rt\setminus[0,\infty)\times\{0\},\Phi_\FG)$, as 
$\partial_{y'}\hat f=\hat g$.

By Theorem~\ref{thm:Germs}, the equation $L_{\xi'_\F}f=g\in C^k(\Rt)$ 
has a $C^r$ solution, $r=1,\dots,k$, if and only if 
$[(\Phi_\FG)_*g]_{S\fG^{r,k}(\bL^2_0)}\in \Theta_{r,k}$, namely if and only if 
the $C^{k+1}$ function $\varphi(x')=\int_{-1}^1\hat g(x',y')dy':[-\infty,0)\to\bR$ can be 
extended to a $C^r$ function at $x'=0$. The solution is, instead, $W^{l,p}_{loc}$
if and only if $[(\Phi_\FG)_*g]_{S\fW^{l,p,k}(\bL^2_0)}\in \Theta_{l,p,k}$,
namely iff $\varphi(x')$ has a $W^{l,p}$ singularity at $x'=0$.
Below we discuss in some detail a few concrete cases. 

As shown in the proof of Theorem~\ref{thm:Theta}, the condition 
$$
|\hat g(x',y')|\leq C\left[(x')^2+(y')^2\right]^{-\alpha},\,C>0
$$
is enough to grant the existence 
of $C^0$ solutions for $\alpha<1/2$ and of $L^1_{loc}$ solutions for $1/2\leq\alpha<1$.

Consider, for example, the function
$$
\hat g(x',y')=\left[(x')^2+(y')^2\right]^{-1/4}\in C^\infty(\Rt\setminus(0,0)),
$$
so that
$$
g(x,y)=\Phi_\FG^*\hat g(x,y)=\frac{e^{-x/2}}{\sqrt{1+y^2}}\in\smooth.
$$
Then a solution to $L_{\xi'_F}g=f$ is given, in the normal chart $(x',y')$, 
by the function
$$
\hat f(x',y')= y' 
\left[1 + \frac{(y')^2}{(x')^2}\right]^{\frac{3}{4}} {}_2F_1\left(1, \frac{5}{4}, \frac{3}{2}; -\frac{(y')^2}{(x')^2}\right),
$$
where ${}_2F_1(a,b,c;z)$ is the Gaussian hypergeometric function, 
which writes as
$$
f(x,y)=
-ye^x\left[1 + \frac{4y^2}{(y^2-1)^2}\right]^{\frac{3}{4}}
{}_2F_1\left(1, \frac{5}{4}, \frac{3}{2}; -\frac{4y^2}{(y^2-1)^2}\right)
\in C^0(\Rt)
$$
in the $(x,y)$ coordinates.

On the other side, for
$$
\hat g(x',y')=\frac{1}{\sqrt{(x')^2+(y')^2}}\in C^\infty(\Rt\setminus(0,0)),
$$
namely
$$
g(x,y)=\frac{e^{-x}}{1+y^2}\in\smooth,
$$
we get 
$$
\hat f(x',y')=\ln\left(\sqrt{(x')^2+(y')^2}+y'\right),
$$
namely 
$$
f(x,y)=x+2\ln|1-y|
\in L^1_{loc}(\Rt)\cap C^\infty(\Rt\setminus S_\xi).
$$
Note that the equation $L_{\xi_F}f(x,y)=\frac{e^{-x}}{1+y^2}$ is equivalent
to $L_{\xi}f(x,y)=2$, which is why we found exactly the solution we already
discussed in Example~\ref{ex:motivations}.

More generally, the condition
$$
|\hat g(x',y')| 
\leq 
C\left[(x')^2+(y')^2\right]^{-\alpha},\,\,\,(x',y')\in U_0,\,C>0
$$
where $U_0$ is some left neighborhood of the origin, writes down, 
in the original coordinates $(x,y)$, as 
$$
|g(x,y)|\leq C e^{2\alpha x}\left[1+y^2\right]^{-2\alpha}\,,\,\,\,
(x,y)\in S_M
$$
where $S_M=(-\infty,-M)\times(-1,1)$, for some $M>0$.
Since $1+y^2$ is bounded and larger than 1 for every $y\in S_M$, 
this means that $g$ will give rise to $C^0$ solutions of $L_\xi f=g$ when 
$|g(x,y)|\leq e^{-\alpha x}$ for $\alpha<1/2$ and to $L^1_{loc}$ solutions 
for $\alpha<1$.

The latter is the same condition given in~\cite{DGK10}, Proposition~3.1, for the 
existence of $L^1_{loc}$ solutions to $L_{\xi_1}f=g$. This approach though shows that 
it is enough that this inequality be satisfied by $g$ on some neighborhood of 
$x=-\infty$ within the strip $|y|<1$ rather than on the whole plane.
\subsection{$\xi=2(2y-1)\,\partial_x+(1-y^2)\,\partial_y$}
\label{ex:TodorNH}
\begin{figure}
  \centering
  \includegraphics[width=5.5cm]{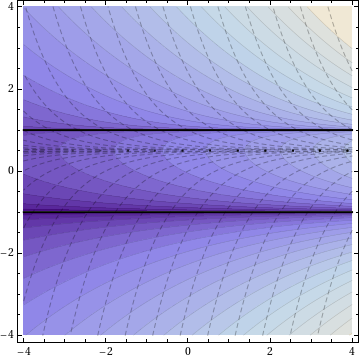}\hskip.5cm
  \includegraphics[width=5.5cm]{2}
  \caption{%
    \footnotesize [left] Foliations of the integral trajectories of 
    $\xi=2(2y-1)\,\partial_x+(1-y^2)\,\partial_y$ (continuous lines) and
    $\eta=2\partial_x+(1-2y)\partial_y$ (dashed lines). These are tangent,
    respectively, to the level sets of the function $F(x,y)=(y+1)^3(y-1)e^x$,
    whose gradient is degenerate on $y=-1$, and of the regular function 
    $G(x,y)=(2y-1)e^x$. The leaf spaces $\cF_\xi$ and $\cF_\eta$ have the same 
    topology as the corresponding ones in Fig.~\ref{fig:12} but this time
    in $C^1(\cF_\xi)$ there are no regular functions, since the derivative
    of every $C^1$ function must be zero on the leaf $y=-1$.
    [right] Image of $\cF_\xi$ and $\cF_\eta$ via $\Phi_\FG$. 
    The action of $\Phi_\FG$ is identical to the one described 
    in Fig.~\ref{fig:12}, but this time its differential on $y=-1$
    is zero, i.e. $\Phi_\FG$ is not an immersion.
  }
  \label{fig:12b}
\end{figure}
This vector field has the same separatrices as the previous one but it
is non-Hamiltonian. A smooth generator of $\ker L_\xi$ is given by
$F(x,y)=(y+1)^3(y-1)e^x$ and a regular Hamiltonian function for a
transverse foliation that minimally separates $\cF_\xi$ is given by
$G(x,y)=(2y-1)e^x$. In this case 
$$
\Omega_\FG = dF\wedge dG=2(1+y)^2(2-4y+3y^2)e^{2x}dx\wedge dy
$$
is degenerate on the separatrix $y=-1$. Correspondingly,
$$
\xi'_{\F}=\Omega_\FG^{-1}(dF)=\frac{e^{-x}}{2(2-4y+3y^2)}\xi
$$
is regular, while
$$
\xi'_{\G}=\Omega_\FG^{-1}(dG)=\frac{e^{-x}}{2(1+y)^2(2-4y+3y^2)}\left(2\partial_x+(1-2y)\partial_y\right)\,.
$$ 
diverges on $y=-1$.
Similarly, $\Phi_\FG$ maps, like in the previous example, the whole plane
injectively into $\Rt\setminus[0,\infty)\times\{0\}$, but this time $\Phi_\FG$
is not an immersion since its differential is zero on the separatrix $y=-1$.

When we restrict $\Phi_\FG$ to the set $\{|y|<1\}$, the local coordinates 
$(x',y')=(F,G)$ are well-defined and smooth we can repeat verbatim all
calculation for the explicit solutions shown in the previous example. 
This corresponds to switching differential structure in $\Rt$ and looking 
for solutions in $\Rt_{\Phi_\FG}$. We recall that, since $\xi$ is of finite type, 
$C^k(\Rt_{\Phi_\FG})\subset C^k(\Rt)$; in fact, $C^k(\Rt_{\Phi_\FG})$ is the set of
all $C^k$ functions that go to zero as $(y+1)^3$ in a neighborhood of $y=-1$.

Unlike the Hamiltonian case though, 
now the solvability conditions based on the germs of $g$ in a neighborhood 
of the separatrices are only sufficient. New solutions, not covered by the 
theorems in~\cite{DGK10}, can be found by
letting $\hat g$, in the normal chart coordinates, diverge on the 
separatrices, as long as $\Phi_\FG^*\hat g$ has the required differentiability.
Consider, for example, the case of
$$
\hat g(x',y')=\,^3\sqrt{x'}\;\left[y'-\sqrt{(x')^2+(y')^2}\right]^2
$$
This function behaves as $4\,\,^3\sqrt{x'}\,(y')^2$, of class $C^0$,
in a neighborhood of the separatrix 
$\{0\}\times(0,\infty)$, which is the image of $y=-1$, and more regularly, as 
$4\,\,^3\sqrt{(x')^{13}}\,(y')^{-2}$, of class $C^4$, in a neighborhood 
of the separatrix $\{0\}\times(-\infty,0)$, which is the image of $y=1$.
In $\Rt_{\Phi_\FG}$, therefore, $\hat g$ is of class $C^0$ and so it gives rise 
to a globally $C^0$ solution
$$
\hat f(x',y')=\,^3\sqrt{x'}\left[(x')^2 y' + \frac{2 }{3}(y')^3 - \frac{2}{3} 
\left((x')^2 + (y')^2\right)^{3/2}\right]
$$
In $\Rt$, instead, the solution is more regular:
$$
\left(\Phi_\FG^*\hat f\right)(x,y)=(1+y)(1-y)^{1/3}
\left[F^2 G + \frac{2 }{3}G^3 + \frac{2}{3}\left(F^2 + G^2\right)^{3/2}\right]e^{x/3}
$$
behaves as $(y-1)^{13/3}e^{4x/3}$ close to $y=1$, i.e. of class $C^4$, and
is smooth close to $y=1$, so we have a globally $C^4$ solution.
\subsection{$\xi=2x^2y\partial_x-\partial_y$}
\label{sec:2}
\begin{figure}
  \centering
  \includegraphics[width=5.5cm]{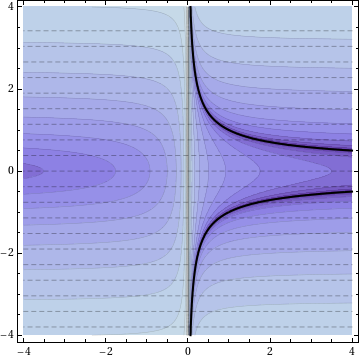}\hskip.5cm
  \includegraphics[width=5.5cm]{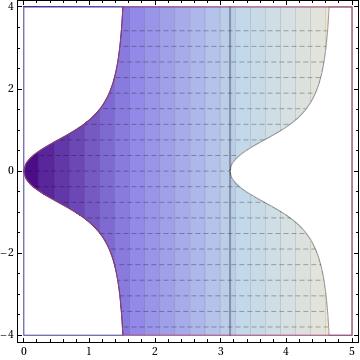}
  \caption{%
    \footnotesize [left] Foliations of the integral trajectories of 
    $\xi=2x^2y\partial_x-\partial_y$ (continuous lines) and
    $\eta=\partial_x$ (dashed lines). These are tangent,
    respectively, to the level sets of the regular functions $F(x,y)=y^2-e^{-x}$
    and $G(x,y)=y$. The leaves $y^2=e^x$ are the only pair of inseparable 
    leaves in $\cF_\xi$, while $\cF_\eta\simeq\bR$ has no inseparable leaves.
    Note that, since $\xi$ is intrinsically Hamiltonian, $C^1(\cF_\xi)$ contains
    regular functions.
    [right] Image of $\cF_\xi$ and $\cF_\eta$ via $\Phi_\FG$. 
    The leaves of $\cF_\xi$ become vertical lines, those of $\cF_\eta$ horizontal ones.
    The images of the two separatrices are the sets $\{\pi\}\times(-\infty,0)$
    and $\{\pi\}\times(0,\infty)$, the image of the line $y=0$ is the set 
    $(0,\pi)\times\{0\}$.
  }
  \label{fig:34}
\end{figure}
This vector field and the next one are not invariant with respect to any
translation and therefore are not covered by the theorems in~\cite{DGK10}.
A direct calculation shows that $L_\xi F(x,y)=0$ for the regular function
$$
F(x,y)=
\begin{cases}
  \tan^{-1}\left[y^2-\frac{1}{x}\right]&\!\!\!\!\!,\,x<0;\cr
  \pi/2&\!\!\!\!\!,\,x=0;\cr
  \tan^{-1}\left[y^2-\frac{1}{x}\right]+\pi&\!\!\!\!\!,\,x>0,\cr
\end{cases}
$$
namely $\cF_\xi$ is Hamiltonian.
The set $F^{-1}(c)$ has two connected components for $c\geq\pi$ and only one
for $c<\pi$, so the only inseparable integral trajectories of $\xi$ are the
two connected components of $F^{-1}(\pi)$, namely the curves $y=\pm1/\sqrt{x}$
(see Fig.~\ref{fig:34}, left). Since the $y$ component of $\xi$ is always
non-zero, $\cF_\xi$ is everywhere transversal to the foliation in horizontal 
straight lines and it is minimally separated by it. 
In particular, $L_\xi G(x,y)>0$ for $G(x,y)=y$ (see Fig.~\ref{fig:34}). 
In this case
$$
\Omega_{FG}=dF\wedge dG=\frac{dx\wedge dy}{(1 - x y^2)^2 + x^2}.
$$ 
and we get
$$
\xi'_F=-((1 - x y^2)^2 + x^2)\xi\,,\,\,\, 
\xi'_G=((1 - x y^2)^2 + x^2)\partial_x\,.
$$
The image of the plane via the map $\Phi_{FG}$
is the set (see Fig.~\ref{fig:34}, right)
$$
\tan^{-1}(y')^2< x'<\tan^{-1}(y')^2+\pi.
$$

All explicit calculations shown in the first example can be repeated here.
For example, this time the condition 
$$
|\hat g(x',y')| 
\leq 
C\left[(x')^2+(y')^2\right]^{-\alpha},\,\,\,(x',y')\in U_0,
$$
for the existence of regular and weak solutions of the cohomological equation
translates, in $(x,y)$ coordinates, into
$$
|g(x,y)|\leq C x^{2\alpha},\,\,\,x\in S_M,
$$
where in this case $S_M$, $M>0$, is the portion of the set $y^2-1/x<0$
contained in the half-plane $x>M$. The corresponding condition for
solutions of $L_{\xi}f=g$ is 
$$
|g(x,y)|\leq C \frac{x^{2\alpha}}{(1 - x y^2)^2 + x^2}\leq C'x^{2(\alpha-1)},\,\,\,x\in S_M,
$$
namely $L_\xi f=g$ admits $C^0$ solutions when $|g(x,y)|\leq C x^{-1-\epsilon}$ 
for some $\epsilon>0$ and $L^1_{loc}$ solutions when 
$|g(x,y)|\leq C x^{-\epsilon}$ for some $\epsilon>0$.
%
\subsection{$\xi=\displaystyle\frac{3(1+e^x y^2)^2 - e^x(6- 19 e^{ x} +22 e^{x}y^2)}
  {3 + 2 e^x(5+3y^2) + 3 e^{2 x}(1-y^2)^2 }\partial_x+\partial_y$}
%
\begin{figure}
  \centering
  \includegraphics[width=5.5cm]{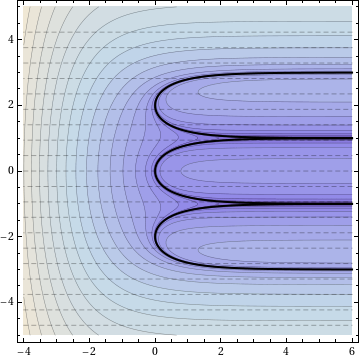}\hskip.5cm
  \includegraphics[width=5.5cm]{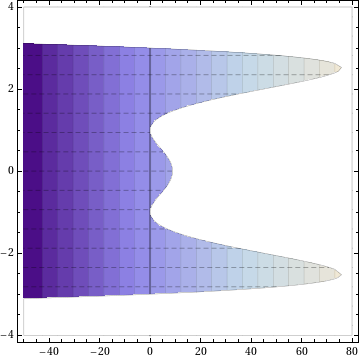}
  \caption{%
    \footnotesize [left] Foliations of the integral trajectories of 
    $\xi=\frac{3(1+e^x y^2)^2 - e^x(6- 19 e^{ x} +22 e^{x}y^2)}
    {3 + 2 e^x(5+3y^2) + 3 e^{2 x}(1-y^2)^2 }\partial_x+\partial_y$
    (continuous lines) and $\eta=\partial_x$ (dashed lines). These are tangent,
    respectively, to the level sets of the regular functions 
    $F(x,y)=(1-(y-2)^2-e^{-x})(1-y^2-e^{-x})(1-(y+2)^2-e^{-x})$ 
    and $G(x,y)=y$. The three leaves $(y\pm2)^2=1-e^{-x}$ and $y^2=1-e^{-x}$ are the 
    only separatrices of $\cF_\xi$ and are all inseparable from each other,
    while $\cF_\eta\simeq\bR$ has no inseparable leaves.
    Note that, since $\xi$ is intrinsically Hamiltonian, $C^1(\cF_\xi)$ contains
    regular functions.
    [right] Image of $\cF_\xi$ and $\cF_\eta$ via $\Phi_\FG$. The leaves of $\cF_\xi$
    become vertical lines, those of $\cF_\eta$ horizontal ones.
    The image, under $\Phi_\FG$, of the three separatrices are the sets 
    $\{0\}\times(3,1)$,
    $\{0\}\times(1,-1)$ and $\{0\}\times(-1,-3)$; the images of the lines 
    $y=\pm1$ are the sets $(-\infty,0)\times\{1\}$ and $(-\infty,0)\times\{-1\}$.
  }
  \label{fig:3seps}
\end{figure}
Also this last vector field gives rise to a Hamiltonian foliation. For example,
a generator of $\ker L_\xi^{(\infty)}$ is given by the smooth regular function
$$
F(x,y)=(1-(y-2)^2-e^{-x})(1-y^2-e^{-x})(1-(y+2)^2-e^{-x}).
$$
Using the Descartes' rule of signs it is easy to verify that, for $c>0$,
the level sets $F^{-1}(c)$ are connected while, for $c<0$, each level set
consists of three disjoint lines. The three curves in the level set $F^{-1}(0)$ 
are therefore inseparable from each other. 
Since the $y$ component of $\xi$ is always different from 0, $\xi$ is 
transversal to every horizontal line and it is easily seen that the 
foliation in horizontal lines minimally separates it (see Fig.~\ref{fig:3seps}). 
In particular, $L_\xi G=1>0$ for $G(x,y)=y$.
The image of the plane via $\Phi_\FG$ is the set
$$
x'<(1-(y'-2)^2)(1-(y')^2)(1-(y'+2)^2)
$$
(see Fig.~\ref{fig:3seps}, right). 

In this case,
$$
\Omega_\FG=dF\wedge dG=dx\wedge dy,
$$
so that 
$$
\xi_\F=\Omega_\FG^{-1}(dF)=\xi\,,\,\,\,\xi_\G=\Omega_\FG^{-1}(dG)=\partial_x
$$

The condition for the existence of solutions of $L_{\xi}f=g$
$$
|\hat g(x',y')| 
\leq 
C\left[(x')^2+(y')^2\right]^{-\alpha},\,\,\,(x',y')\in U_{\pm1},
$$
where $U_1$ and $U_{-1}$ are left neighborhoods of, respectively,
$(0,1)$ and $(0,-1)$, translates now, in $(x,y)$ coordinates, into
$$
|g(x,y)|\leq C e^{3\alpha x},\,\,\,(x,y)\in S_{M,\pm1},
$$
where in this case $S_{M,1}$ (resp. $S_{M,2}$), $M>0$, is the portion of the set
$F(x,y)<0$ contained in the half-plane $x>M$. 
Hence, for example, $L_\xi f=g$ admits $C^0$ solutions if  
$|g(x,y)|\leq C e^{(3/2-\epsilon)x},\,\,\,(x,y)\in S_{M,\pm1}$, for some $\epsilon>0$ and
$L^1_{loc}$ solutions if
$|g(x,y)|\leq C e^{(3-\epsilon)x},\,\,\,(x,y)\in S_{M,\pm1}$, for some $\epsilon>0$.

By modifying suitably this particular $F(x,y)$, it is easy to obtain examples of 
intrinsically
Hamiltonian and intrinsically non-Hamiltonian regular vector fields whose foliation 
has a single node at which concur any number of inseparable separatrices and which 
is minimally separated by the horizontal foliation.

\section*{Acknowledgments}
The author is grateful to T. Gramchev for precious suggestions that made the present article richer and more interesting.

\bibliography{refs}

\end{document}